\documentclass[12pt, journal,onecolumn]{IEEEtran}
\usepackage[ruled]{algorithm2e}
\usepackage{fullpage}
\usepackage{cite}
\usepackage{amsmath, amsthm, amssymb,graphicx,bbm}
\usepackage{verbatim}
\usepackage{psfrag}
\usepackage{algorithm2e}
\usepackage{bm}
\usepackage{color}
\usepackage[tight,footnotesize]{subfigure}

\newcommand{\R}{\mathbb{R}}

\newcommand{\be}{\beta}
\newcommand{\al}{\alpha}

\newcommand{\bd}{\mathbf}

\newcommand{\tl}{\tilde}

\newcommand{\x}{\bd{x}}
\newcommand{\y}{\bd{y}}

\newcommand{\ov}{\overline}

\newcommand{\la}{\lambda}
\newcommand{\z}{\bd{z}}

\newcommand{\bma}{\begin{bmatrix}}
\newcommand{\ebma}{\end{bmatrix}}

\newcommand{\Del}{\Delta}
\newcommand{\nn}{\nonumber}

\newcommand{\g}{\mathbf{g}}

\newcommand{\mc}{\mathcal}

\newcommand{\hd}{\mathbf{\hat{d}}}
\newcommand{\bD}{\bm{\Del}}
\newcommand{\rld}{\mbox{rld}}

\DeclareMathOperator{\E}{\mathbb{E}}

\newtheorem{defn}{Definition}
\newtheorem{thm}{Theorem}
\newtheorem{claim}[thm]{Claim}
\newtheorem{lem}[thm]{Lemma}

\oddsidemargin=-3pt 
\textwidth=7in

\title{{ Network Risk Limiting Dispatch}:\\ Optimal Control and Price of Uncertainty}
\author{
Baosen Zhang,~\IEEEmembership{Student Member,~IEEE,}
Ram Rajagopal,~\IEEEmembership{Member,~IEEE,}
        David Tse,~\IEEEmembership{Fellow,~IEEE}
\thanks{All authors are with Stanford University. B. Zhang is with the Departments of Civil and Environmental Engineering and Management Sciences and Engineering. R. Rajagopal is with the Department of Civil and Environmental Engineering.  D. Tse is with the Department of Electrical Engineering. E-mails:\{zhangbao,ramr,dntse\}@stanford.edu}
}
\begin{document}
\maketitle

\begin{abstract}
Increased uncertainty due to high penetration of renewables imposes significant costs to the system operators.  The added costs depend on several factors including market design, performance of renewable generation forecasting and the specific dispatch procedure.  Quantifying these costs has been limited to small sample Monte Carlo approaches applied specific dispatch algorithms. The computational complexity and accuracy of these approaches has limited the understanding of tradeoffs between different factors. {In this work we consider a two-stage stochastic economic dispatch problem. Our goal is to provide an analytical quantification and an intuitive understanding of the effects of uncertainties and network congestion on the dispatch procedure and the optimal cost.} We first consider an uncongested network and calculate the risk limiting dispatch. In addition, we derive the price of uncertainty, a number that characterizes the intrinsic impact of uncertainty on the integration cost of renewables. Then we extend the results to a network where one link can become congested. Under mild conditions, we calculate price of uncertainty even in this case. We show that risk limiting dispatch is given by a set of deterministic equilibrium equations.  The dispatch solution yields an important insight: congested links do not create isolated nodes, even in a two-node network.  In fact, the network can support backflows in congested links, that are useful to reduce the uncertainty by averaging supply across the network.  We demonstrate the performance of our approach in standard IEEE benchmark networks.

\end{abstract}
\section{Introduction}

The existing electric grid is operated so that online generation is sufficient to meet peak period demand. But \emph{uncertainties} arising from outages and unpredicted fluctuations in demand and renewable generation can cause a loss of load event, when online generation does not meet demand some load needs to be disconnected from the power system.  To decrease the loss of load probability, the system operator (SO) schedules generation and transmission line capacity so it exceeds forecasted peak net demand by a small percentage (around 5\%), to compensate for small amount of uncertainty due to generator contingencies and load forecast errors. This additional reserve capacity is utilized in real time as actual loads and contingencies are revealed.  Typically energy and reserve capacity are scheduled following a `3-$\sigma$' rule: the total amount scheduled is the forecast plus `3-$\sigma$', where $\sigma$ is the standard deviation of net demand forecasting error. Currently, the typical values of $\sigma$ is around 1\% to 2\% of total load.

Due to various incentives and state goals such as the renewable portfolio standards (RPS), renewables are expected to make up to 30\% to 40\% of generation mix in the USA. Increased penetration of renewable generation increases the uncertainty in the grid \cite{MilliganKirby:09, Makarov:09}. In such scenario, the current deterministic dispatch practice would require large reserve capacity allocations.  Such allocations increase energy costs significantly and accrue unwanted emissions \cite{RBDEWV2012}. {For example, each additional $1\%$ of reserve costs CAISO about $50$ million dollars (based on 2009 costs). In light of the significant financial implications, various alternative forms of stochastic dispatch procedures have been studied \cite{Morales:2009, Xie:10}.} The goal of these procedures is to solve a dispatch program that utilizes available forecasts and the sequential decision nature of the problem.  Past approaches often resulted in programs that were infeasible in practice due to computational complexity and relying { on} Monte-Carlo type approaches that could only be calculated with a limited number of scenario samples. The complexity of these procedures makes it even difficult to reliably evaluate the benefit of smart grid technologies or improvements in forecasting.  Moreover, these approaches require significant changes in the operating procedures and software of system operators.   In some cases, the forecast error distributions are not utilized appropriately or at all \cite{DeCesaroetal:09}.

Recently, Risk Limiting Dispatch (RLD) \cite{Varaiya11, RBVW2011} was proposed as { a} new dispatch framework.  By utilizing a simplified approach that is applied after unit commitment and does not consider network constraints, a very simple analytic dispatch rule can be obtained. The rule proposes an alternative deviation calculation that depends on error performance of forecasting, the costs of various generation alternatives and the timing of dispatch decisions stages.  It was shown that in uncongested and lossless networks, the proposed dispatch significantly reduces the renewable integration cost.  Moreover, reliable estimates of various metrics such as integration cost, emissions and costs due to forecasting performance can be easily obtained \cite{RBDEWV2012}.  

The first contribution of this paper is the derivation of risk limiting dispatch for a \emph{congested network}. This dispatch is denoted the network RLD and we show that it is simple to implement computationally(without the need for Monte Carlo type of simulations),  results in reliable and interpretable dispatch decisions and can be used to provide stable performance estimation. We model economic dispatch under uncertainty as a two-stage dispatch problem where the decision is made for each operating hour. Without loss of generality, we assume that the first stage occurs at the day ahead market and the second stage occurs at the real time market. In a day-ahead market (DAM), the SO purchases energy at generators connected to different buses in the network, utilizing forecasts and error distributions for loads and renewable generation at various buses. In the real-time market (RTM), dispatch decisions are made utilizing the realized values of all loads, renewable generations and physical network constraints such as transmission limits. We consider a DC power flow model for analysis and validate our results by considering full AC model in case studies.   

The key observation that makes the problem tractable is that in real networks, only a \emph{very small number of transmission lines are congested}. For example, the commonly used IEEE benchmark networks \cite{IEEEbenchmark} are far from being congested under normal operations. Also, the WECC model for the California network only include a few congested lines \cite{Price11}. We expect that the congestion patterns would not shift excessively under the uncertainty levels typically present in the renewable penetration levels expected in the near future. Intuitively, knowing the congestion patterns should reduce the complexity of the dispatch procedure since not all possible network constraints need to be considered. In this paper we formalize this intuition by developing an accurate picture of a network operating under \emph{expected congestion}, that is where congestion is predicted in the DAM. We observe a novel fact: a network operating under expected congestion due to uncertainty behaves qualitatively different than a network under deterministic loads and generation. We introduce the concept of \emph{back flow} to capture this behavior.  Back flows are directed permissible flows in congested links that need to be included in a two stage dispatch.
The possibility of back flow is somewhat surprising, as congestion in a two bus network in deterministic dispatch program implies the two buses are decoupled  \cite{SuGamal2012, Kirschen04}. We also develop a computationally simple dispatch approach that utilizes this structural understanding to compute the dispatch in a simplified form via a set of equilibrium equations.  The proposed approach can be easily integrated into existing unit commitment and dispatch procedures of the system operators. 

The second contribution of this paper is in developing the concept of \emph{price of uncertainty} that characterize the intrinsic impact of uncertainty on the cost of dispatch. Given a network, the integration cost is defined as the difference between expected cost under the \emph{optimal} dispatch procedure (i.e., RLD)  and the dispatch cost if the SO has a clairvoyant view of all loads and renewable generations  \cite{Rajagopal2012PMAPS}. We observe that under the expected mild to moderate uncertainty levels, the integration cost increases \emph{linearly} with the uncertainty in the forecast error and the per unit of uncertainty cost of integration is the price of uncertainty. 
The price of uncertainty can also be interpreted as the benefit of improving forecast procedures and can be used as a metric to evaluate the benefits of forecasting and provide a reference point to judge specific dispatch methodologies.


{ A brief discussion of related works follow.  Monte Carlo based dispatch formulations that include security constraints and DC power flow balance have been studied recently  \cite{Bouffard:2008, Morales:2009, Saric2009,Papavasiliou11,Tuohy09,Ruiz09,GK03}. They result in difficult optimization problems that can only be evaluated with (limited) Monte Carlo runs and do not provide much insight into the dispatch methods. MPC approaches \cite{GT2011, IXJ2011} address recourse in decision making, but still rely on Monte Carlo, and may not be appropriate when the number of recourse opportunities is small, limiting the corrections calculated by MPC. Single market problems are more tractable \cite{Morales:10, Pinsonetal:07, MatevosyanSoder:06, Bathurst:02} but do not capture the nature of recourse or congestion.  Methodologies for assessing reserves in the presence of significant wind generation was presented in \cite{DohertyOMalley:05} without including two stages or congestion. Current deterministic dispatch avoids complicated procedures by considering a worst-case net load to be satisfied, namely the forecast plus three standard deviations of forecast error \cite{boyle2008renewable}.  Other papers (e.g. \cite{Chen2012, Street2011}) investigated a robust version of unit commitment utilizing a DC flow model without recourse to represent the market model, and \cite{Bertsimas11} used a similar model but is fully adaptive to the realization of the uncertainties.
}

The remainder of the  paper is organized as follows.  Section \ref{sec:model} sets up the two-stage dispatch model in detail, and describes the uncertainty model.  Section \ref{sec:single} reviews a single bus model and develops the price of uncertainty. In order to develop this qualitative understanding under limited congestion patterns we first study small network scenarios. Section \ref{sec:Networks} first investigates a 2-bus network, defines the concept of back flow and identifies the appropriate structural results, utilizing it to develop a simple dispatch methodology. Section \ref{sec:Networks} then investigates general networks with a single congested link and demonstrates an appropriate reduction mechanism.  Section \ref{sec:simulations} provides computational experiments illustrating the performance of the procedure in real networks.  Section \ref{sec:con} concludes with future work.



\section{Model Setup} \label{sec:model}
\subsection{{Network Risk Limiting Dispatch (N-RLD)}}\label{subset:two-stage}

 \begin{figure}[ht]
\centering
\psfrag{g}{$g$}
\psfrag{gr}{$g^r$} 
\psfrag{D1}{$d_1$}\includegraphics[scale=0.4]{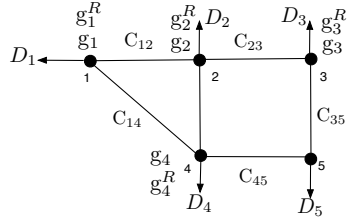}
\caption{Power network example with five nodes. Demand at bus $i$ is denoted by $D_i$, the first stage generation by $g_i$ and second stage generation by $g_i^R$. The capacity of line between bus $i$ and bus $k$ is denoted $C_{ik}$ and the flows on each line is determined by the net injection, $g_i+g_i^R-D_i$ at each bus. }%
\label{fig:dispatch}%
\end{figure}

Network Risk Limiting Dispatch (N-RLD) is formulated as a two stage optimization problem  in an power network (Figure~\ref{fig:dispatch}). The first stage represents a market where the SO can buy energy corresponding to dispatch decisions. Decisions are made at each node of the network.   The second stage corresponds to delivery or real time, which represents a 5 or 15 minute interval during which energy is delivered. Stage $1$  typically occurs 24 hours ahead of real-time and slow ramping generation or base load generation is dispatched at this stage \cite{Kirschen04}. In some cases, stage $1$ can represent a market an hour ahead of real-time. Without loss of generality we call this stage `day ahead'.  Stage 2 is then the `real-time'.

The SO makes dispatch decisions constrained by an $n$-bus power network  with $m$ transmission lines. The SO has to satisfy a random load $l_i$ at each bus $i$, known only at real-time. He has available for free $w_i$ units of renewable energy, also revealed only at real-time. In stage $1$, the SO can dispatch generation $g_i$ after observing some information \footnote{For example, the information observed in the day-ahead could be the weather information.} available about the random load and wind at the buses. In stage $2$ or recourse stage, the SO chooses  $g_i^R$ after observing the random loads and demands to balance the network. { The $g_i^R$'s can be seen as generation level of fast generators or shedded load.} Energy can only be purchased in the first stage so $g_i \geq 0$.   Renewable generation is not dispatchable and is taken as {\it negative load}, following standard practice. The net load at bus $i$ is defined as $d_i = l_i - w_i$ and it can be positive or negative. Excess power at any bus $i$ in the second stage can be disposed off for free, so  $g_i^R$ can be positive or negative.

The cost of dispatching generation at bus $i$ in the first stage is $c_i(g_i)$ and $q_i(g_i^R)$ in the second stage. In general both costs are represented by increasing, convex functions. When not specified, we assume that both costs are linear and given by $c_i(g_i) = \alpha_i g_i$ and $q_i(g_i^R) = \beta_i (g_i^R)^+$, where $\alpha_i$ and $\beta_i$ are prices in dollars per MW and $(x)^+ = \max(x,0)$. Later we show the assumption is not restrictive since while operating under mild to moderate uncertainty, we are interested in perturbations of the dispatch around its operating point, and it can be shown that the linear cost segment at that point determine costs.  Moreover, to avoid trivial solutions and arbitrage, assume day ahead prices are smaller than real time prices, i.e.,  $\al_i \leq \be_k$ for all $1 \leq i,k \leq n$. The total SO cost for the first stage is  the sum of first stage costs, and the total second stage cost is the sum of the second stage costs.

Once first stage dispatch decisions are made and renewable generation is realized, second stage dispatch decisions and power flows in the system are determined by the physical network and its properties. We consider a DC power flow model~\cite{Stott09} for dispatch calculation purposes.   We validate the performance of the dispatch by considering the full AC model in the case studies. Dispatch decisions need to respect network flow constraints, and in particular transmission line constraints. The capacity of the line connecting nodes $i$ and $j$ in the network is given by $c_{ij}$.  We also utilize an observation about congested transmission links in practice. For example, in CAISO, normally only one or two of the main transmission lines from Northern California to Southern California experience congestion. This {\it limited congestion} assumption will be utilized to simplify the dispatch calculation. In particular in this paper we focus on the scenario with at most one congested link. In future work we generalize this to problems with $k$ congestion link following the same approach proposed here.

{ 
To establish the information structure of the two stage optimization problem, we propose the following forecasting model. The net load is decomposed as
\begin{equation}
\bd{{d}} = \bd{\hat{d}}+\bd{e},
\end{equation}
where $\bd{\hat{d}}=[\hat d_1 \hat d_2 \dots \hat d_n]^T$ is the first stage forecast  and $\bd{e}=[e_1 e_2 \dots e_n]^T$ is a zero mean Gaussian distributed random vector with covariance matrix $\Sigma_e = \sigma_e^2\Sigma'_e$, where $\Sigma_e'$ is a known error correlation matrix ($\bd{e} \sim N(\bd{0},\sigma_e^2\Sigma'_e)$). Furthermore, the forecast $\bd{\hat{d}}$ and error  $\bd{e}$ are independent.  Moreover, average performance of the forecast, in the form of forecast error variance $\sigma^2_e$ is provided for each bus or operating region.

The Gaussian error assumption is justified by recent studies (e.g.~\cite{Makarov:09}) that observe forecast errors are distributed as a (truncated) Gaussian  random variable. For the typical variances utilized, the errors in utilizing a Gaussian distribution is negligible and an accepted practice in dispatch mechanisms. Also note that the results for Gaussian models carry over to many other distributions with little modification \cite{RBVW2011}.

}
{
\subsection{Formulation and Decomposition of N-RLD}

We formulate the mathematical optimization problem in this section. Before stating the entire problem, it is convenient to define the following DC-OPF problem
\begin{subequations}\label{eqn:RT_X}
\begin{align}
 J(\bd{q},\bd{x}) =  & \min_{\g^R,\bd f} \bm{q}^T (\g^R)^+ \label{eqn:cost_X}\\
 &\mbox{subject to } \g^R-\x - \bm{\nabla}^T \bd{f} =0 \label{eqn:nabla},\\
&  \hspace{0.75in} \bm{K} \bd{f} =0 \label{eqn:Kirchoff},\\
 & \hspace{0.75in}  |\bd{f}| \leq \bd{c} \label{eqn:capacity},
\end{align}
\end{subequations}
where $\bd q$ is a positive price vector,  \eqref{eqn:nabla} is the power balance constraint, $\bm{\nabla}^T \in \R^{n \times m}$ is the mapping from branch flows to bus injections \cite{Kanoria11}, $\bd{f}$ is the  $m \times 1$ vector of branch flows, \eqref{eqn:Kirchoff} is Kirchoff's voltage law that states a weighted sum of flows in a cycle must be $0$, \eqref{eqn:capacity} are the capacity constraints on the flows where $\bd{c}=[c_1 \; \dots \; c_m]^T$ and $(x)^+=\max(0,x)$. This optimization problem can be seen as a generic DC-OPF problem with prices $\bd{q}$ and demands $\bd{x}$. Since only the positive part of generations $\g^R$ is reflected in the cost, energy can be disposed for free.   


\noindent \textbf{N-RLD}: The network  risk limiting dispatch problem can be stated as the following stochastic optimization problem:
\begin{itemize}
\item[(i)] Real Time OPF (RT-OPF):
Solve the real time OPF problem $J(\bm{\beta},\bd{d}-\bd{g})$ where $J$ is defined in \eqref{eqn:RT_X}. At real time, the day-ahead dispatch decisions $\bd{g}$ are already made, and the realization of the random variables are known. Therefore the new net demand is $\bd{d}-\bd{g}$, and $J(\bm{\beta},\bd{d}-\bd{g})$ balances the network under the real time prices $\bm{\beta}$.   

\item[(ii)] Day Ahead Stochastic Power Flow (DA-SPF):
\begin{align}\label{eqn:DA-SPF}
V^*(\hat{\bd{d}})&=\min_{\g \geq 0}\left\{\bm{\al}^T \g + \E[J(\bm{\be},\bd{d}-\g)|\hat{\bd{d}}] \right\},
\end{align}
where the expectation is taken with respect to the distribution of $\bd{d}$ conditional on the forecast $\hat{\bd{d}}$.  The constraint $\g \geq 0$ limits the day ahead decisions to purchasing generation power only. Additionally, $\g $ is function of the forecast $\bd{\hat{d}}$ and the error distribution. The optimal solution to \eqref{eqn:DA-SPF} is called the \emph{risk limiting dispatch}.    
\end{itemize}
}
{
\subsection{Integration Cost and Price of Uncertainty}

A fundamental quantity of interest is the impact of uncertainty in the cost of dispatch. We call this quantity the integration cost  \cite{RBDEWV2012}, which is defined   the  {\it difference between the expected cost of the procedure and the expected cost of a dispatch clairvoyant of the realization of $\bd{d}$}.
The clairvoyant dispatch can allocate all the required power in the day ahead by solving the deterministic OPF $V_{C}^*(\bd{d}) = J(\alpha,\bd{d})$. 
The integration cost for a realization of  the information set $\hd$ is given by
\begin{align}\label{eqn:cost}
C_I(\hd) = V^*(\hd) - \E[V_{C}^*(\hd + \bd{e})|\hd].
\end{align}
An important question is regarding the sensitivity of this cost to the forecast error standard deviation $\sigma_e$ when the {\it best possible dispatch} is utilized.  If $C_I$ is a linear function of $\sigma_e$, so $C_I = p \sigma_e$, then $p$ is the {\it price of uncertainty}, a fundamental limit faced by {\it any} dispatch procedure. In this paper we show how it can be calculated for various scenarios.

\subsection{Small-$\sigma$ Assumption}

An important consideration  is the order of magnitude of the error standard deviation $\sigma_e$ compared to the entries in the average net load vector $\bd{\mu}$ and the transmission line capacities. Standard deviation of  day ahead load forecasts $\sigma_L$ are $1\%--2\%$ of the expected load $\mu_L$. Wind error forecasts are more severe, and error standard deviations $\sigma_W$  of $30\%$ of rated capacity $\mu_W$ have been observed. High wind penetration scenarios have about $30\%$ of total load being generated by wind, and therefore the total error would be about $0.01+0.3*0.3=10\%$ of total load.

In contrast to the financial situation, a relative forecast error of $10\%$ would not change the overall physical operating characteristic of the network. More precisely, suppose we calculate the deterministic dispatch based on the forecast values $\hat{\bd{d}}$ and find bus $i$ would be generating power in the first stage. Then with high probability, bus $i$ would still be generating power in the two stage dispatch problem. Also, the network congestion pattern under the deterministic dispatch and the two-stage dispatch should not be drastically different. Sections \ref{sec:single} and \ref{sec:Networks} formalizes these observations. 

We call the operating regime in the above scenarios the {\it small}-$\sigma$ regime. More rigorously, we have the following definition. 
\begin{defn} \label{defn:small}
Let $\hd$ be the predicted net demand and $\sigma_e$ be the standard deviation of the forecast error. The small-$\sigma$ assumption denotes the scaling regime where $ \frac{1}{\sigma_e} \hd \rightarrow \infty $.
\end{defn}
  For the simplicity of exposition, we delegate such limits to the appendix and focus on the intuitive  points of analysis in the main body of the paper. The overall message is that forecast values are very useful in determining the \emph{qualitative} behaviour of the network.
}

\section{{ Single-Bus Network Case}} \label{sec:single}

This section reviews the risk limiting dispatch control for a single-bus network
~\cite{RBVW2011, RBV2011}, and analyzes the price of uncertainty in this scenario. A network can be modeled by a single-bus if congestion never occurs in the network. Under the same-$\sigma$ assumption, this is equivalent to the fact that if there is sufficient capacity under the forecast net-demand, then the forecast errors being small enough compare to the capacity in the network such that line flow limits would not be hit under almost all realizations. 

\subsection{Risk Limiting Dispatch}
Since we only consider a single bus, all variables are scalar. Equivalently, the single bus network can be thought as an $n$-bus network without congestion since buses can freely exchange power\footnote{More precisely, this fact follows from the fact that without congestions, Kirchoff's laws reduces to the law of conservation of energy, which only requires the total power input to be equal to the total power output.}.  In this case, the constraint region in \eqref{eqn:RT_X} reduces to net supply must equal net demand, and the RT-OPF becomes
\begin{align*}
J^*(\be,d-g) = & \min \mbox{ } \be (g^R)^+ \\
&\mbox{s.t. } g^R+g-d =0 \\
=& \beta (d-g)^+.
\end{align*}
The DA-SPF in \eqref{eqn:DA-SPF} can then be reduced to
\begin{subequations} \label{eqn:1bus_opt}
\begin{align}
g^*= \arg \min_g & \al g + \be \E[(d-g)^+|\hat{d}]\\
 \mbox{s.t. } & g \geq 0.
\end{align}
\end{subequations}

RLD can then be derived as follows. Consider the unconstrained optimization problem
\begin{equation} \label{eqn:uncon}
\min_g \; \al g + \be \E[(d-g)^+|\hat{d}].
\end{equation}
Taking the subgradient with respect to $g$ gives the optimality condition
\begin{align*}
0 &= \al - \be\E[\bd{1}(d-g >0)|\hat{d}] \\
  &= \al - \be\E[\bd{1}(\hat{d}+e-g>0)|\hat{d}] \\
  &= \al - \be \Pr(e>g-\hat{d}|\hat{d}),
\end{align*}
rearranging gives
\begin{equation} \label{eqn:phi}
\Pr(e>g-\hat{d}|\hat{d})=Q(g-\hat{d})=\frac{\al}{\be},
\end{equation}
where $Q(\cdot)$ is the Gaussian Q function. The risk limiting dispatch (optimal dispatch) $g$ is given by inverting \eqref{eqn:phi}
\begin{equation}\label{eqn:rld}
g=\hat{d}+Q^{-1}(\frac{\al}{\be}).
\end{equation}
Note it is possible that $g<0$, it can be shown that the risk limiting dispatch $g^*$ (optimal solution to the constrained problem in \eqref{eqn:1bus_opt}) is given by thresholding
\begin{equation}
g^*=g^+=[\hat{d}+Q^{-1}(\frac{\al}{\be})]^+.
\end{equation}

\subsection{Price of Uncertainty}
Since most power systems would not have $100\%$ penetration in the near future, we assume that the net demand $d$, and its prediction $\hat{d}$, are positive.  Then first we would show the price of uncertainty exists (i.e. the integration cost is linear in $\sigma_e$), and then calculate its value.
\begin{thm} \label{thm:price_single}
Suppose $d>0$. Then $C(\hat{d})$ defined in \eqref{eqn:cost} is linear under the small-$\sigma$ assumption and can be written as
\begin{equation} \label{eqn:cost_linear}
\lim_{\frac{1}{\sigma_e} \hat{d} \rightarrow 0} C(\hat{d})= \sigma_e p,
\end{equation}
where $\sigma_e$ is the standard deviation of the error $e$ and { $p=\beta \phi (Q^{-1} (\frac{\alpha}{\beta}))$ ($\phi(\cdot)$ is the standard Gaussian density and $Q(\cdot)$ is the complimentary Gaussian cumulative density function).} 
\end{thm}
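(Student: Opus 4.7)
The plan is to compute $V^*(\hat d)$ and $\E[V_C^*(\hat d+e)\mid \hat d]$ separately in closed form, and then show that in the small-$\sigma$ limit their difference reduces to $\sigma_e \,\beta\,\phi(Q^{-1}(\alpha/\beta))$. First I would invoke the risk limiting dispatch formula just derived: since $e \sim N(0,\sigma_e^2)$, the unconstrained optimality condition becomes $Q((g-\hat d)/\sigma_e)=\alpha/\beta$, giving $g = \hat d + \sigma_e \,k$ with $k:=Q^{-1}(\alpha/\beta)$. Under the small-$\sigma$ assumption $\hat d/\sigma_e\to\infty$, so for all sufficiently small $\sigma_e$ we have $\hat d + \sigma_e k > 0$ and the nonnegativity constraint is inactive; hence $g^* = \hat d + \sigma_e k$.

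Next I would plug this $g^*$ into the day-ahead cost $V^*(\hat d)=\alpha g^* + \beta\,\E[(d-g^*)^+\mid\hat d]$. Because $d-g^* = e - \sigma_e k$, a standard Gaussian computation (substitute $u=e/\sigma_e$, use $\int_k^\infty u\,\phi(u)\,du=\phi(k)$ and $\int_k^\infty \phi(u)\,du=Q(k)$) gives $\E[(d-g^*)^+\mid\hat d] = \sigma_e\bigl(\phi(k)-k\,Q(k)\bigr)$. Substituting and grouping:
\begin{equation*}
V^*(\hat d) = \alpha\hat d + \sigma_e\bigl[k(\alpha - \beta Q(k)) + \beta\,\phi(k)\bigr].
\end{equation*}
By the very definition of $k$ we have $\beta Q(k)=\alpha$, so the bracketed term collapses to $\beta\,\phi(k)$ and $V^*(\hat d) = \alpha\hat d + \sigma_e\,\beta\,\phi(Q^{-1}(\alpha/\beta))$.

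For the clairvoyant benchmark, the single-bus DC-OPF yields $V_C^*(d) = \alpha\,d^+$ (buy exactly $d$ at day-ahead price $\alpha$ when $d>0$, otherwise dispose for free). A direct Gaussian calculation gives
\begin{equation*}
\E[d^+\mid\hat d] \;=\; \hat d\bigl(1-Q(\hat d/\sigma_e)\bigr) + \sigma_e\,\phi(\hat d/\sigma_e).
\end{equation*}
Under the small-$\sigma$ assumption both $\hat d\,Q(\hat d/\sigma_e)$ and $\sigma_e\,\phi(\hat d/\sigma_e)$ decay super-polynomially (in fact, faster than any power of $\sigma_e$) because of the sub-Gaussian tails, so $\E[V_C^*(d)\mid\hat d]=\alpha\hat d + o(\sigma_e)$. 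Subtracting from $V^*(\hat d)$ and dividing by $\sigma_e$ yields the claimed limit $p = \beta\,\phi(Q^{-1}(\alpha/\beta))$.

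The only delicate step, which I expect to be the main obstacle, is making the two small-$\sigma$ reductions rigorous: (a) showing that the positive-part thresholding in $g^*=[\hat d+\sigma_e k]^+$ is inactive eventually, and (b) controlling the Gaussian tail corrections in the clairvoyant cost so that they vanish strictly faster than $\sigma_e$ and thus do not contribute to the linear coefficient. Both follow from the Mills-ratio bound $Q(t)\le \phi(t)/t$ applied with $t=\hat d/\sigma_e\to\infty$, which is the kind of tail estimate that the paper announces it will relegate to the appendix.
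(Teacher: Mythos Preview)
Your proposal is correct and follows essentially the same route as the paper: compute the optimal two-stage cost using the RLD first-order condition $Q(k)=\alpha/\beta$, compute the clairvoyant cost $\alpha\,\E[d^+\mid\hat d]$, subtract, and simplify via the identity $\beta Q(k)=\alpha$ to obtain $p=\beta\,\phi(Q^{-1}(\alpha/\beta))$.

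The only presentational difference is in how the small-$\sigma$ limit is executed. The paper conditions all expectations on the event $\{d>0\}$ from the outset and carries that conditioning through a chain of equalities before normalizing by $\sigma_e$; you instead compute $V^*(\hat d)$ and $\E[V_C^*(d)\mid\hat d]$ unconditionally and then bound the Gaussian tail corrections $\hat d\,Q(\hat d/\sigma_e)$ and $\sigma_e\,\phi(\hat d/\sigma_e)$ directly via the Mills-ratio inequality. Your version is arguably cleaner, since it makes transparent exactly which terms are being discarded and why they are $o(\sigma_e)$, whereas the paper's conditioning on $d>0$ (an event of probability $1-Q(\hat d/\sigma_e)\to 1$) hides the same tail estimate inside the limit. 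Either way, the substantive computation---the Gaussian integral $\int_k^\infty(u-k)\phi(u)\,du=\phi(k)-kQ(k)$ collapsing to $\phi(k)$ after using $\beta Q(k)=\alpha$---is identical.
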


Theorem~\ref{thm:price_single} relies on the observation that if net demand is positive ($d>0$), then it is always beneficial to purchase energy in the day ahead as the energy price is higher in real-time, so the optimal schedule must be positive $g^* > 0$. The positivity constraint in the simplified DA-SPF (\eqref{eqn:1bus_opt}) is redundant, and the cost of uncertainty (\eqref{eqn:cost}) can be explicitly computed. The proof of Theorem \ref{thm:price_single} is given in Appendix \ref{app:price_single}. 
Figure \ref{fig:p_single} plots the price of uncertainty for different values of $\al/\be$ with $\be$ set to be 1. Somewhat surprisingly, $p$ is not monotonic in $\al/\be$ and it goes to $0$ as $\al/\be$ approaches $0$ or $\al/\be$ approaches 1. Intuitively, when $\al/\be$ is small, the day ahead cost is very low, and the SO can purchase sufficient amounts of energy to absorb the prediction error. In contrast, when $\al/\be$ is close to 1, the day ahead and real-time costs are similar, so the SO waits until real-time to balance the system once the net load realization is completely known.
\begin{figure}[ht]
\centering
\includegraphics[scale=0.35]{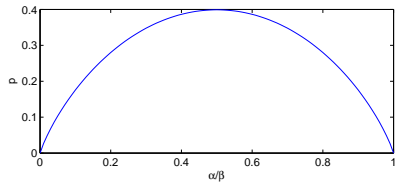}%
\caption{The price of uncertainty for different ratios of $\al/\be$.}%
\label{fig:p_single}%
\vspace{-0.5cm}
\end{figure}
\subsection{Extremely High Penetration}

In some networks renewable power may have a penetration level of more than 100\%, violating the small-$\sigma$ assumption. For example, in a microgrid where wind or solar energy is abundant, the net demand could become negative.
In this case, the cost of uncertainty is no longer linear in the standard deviation of the prediction error and in general cannot be computed in closed form. 

\section{Congested Networks Case} \label{sec:Networks}

The RT-OPF in N-RLD for n-bus networks does not admit an analytical solution as in the single bus case, significantly increasing the complexity of the full dispatch. In particular, it is difficult to obtain the day ahead dispatch $\bd{g}$ in closed form. Moreover, the cost of uncertainty can be a complicated function of the information set and the network capacities $\bd{c}$. These quantities can be numerically computed resorting to a Monte Carlo approach, but the computational challenges are formidable due {to the} high dimensionality of the problem.

{ Instead, the small-$\sigma$ assumption from Sec.~\ref{sec:model} can be explored to obtain a simple and interpretable dispatch. Since the prediction error is a small percentage of the net load, the change in flows caused by that error is also a small percentage, we assume the prediction error is small compared to both $\hd$ and $\bd{c}$.} 
Under the small-$\sigma$ assumption, the {\it qualitative} or structural behavior of the power system predicted in the day-ahead from the forecast $\hd$ will not differ from its realization in real-time after observing $\bd{d}$. If we expect to purchase power at a bus in the day-ahead, then after real-time, we do not expect power to be shed in that bus.  If a transmission line is expected to be congested in a certain direction in the day-ahead, then the direction of congestion would not be reversed at real time. Since qualitative features are consistent with the forecast, a deterministic OPF based on the day-ahead price $\bm{\al}$ and the net load forecast $\hd$ will predict congested lines, congestion directions and buses where energy is purchased correctly. Let $\mathcal P$ denote the feasible injection region of the network (the set of all power injections that satisfy the operational constraints) \cite{Zhang13}.  {This deterministic OPF is denominated {\it Nominal Day-Ahead OPF} (NDA-OPF):} 
\begin{subequations}\label{eq:NDA-OPF}
\begin{align}
J(\bm{\al},\hd) = \min_{\g} &\bm{\al}^T (\g)^+ \\
 \mbox{subject to } & \g-\hd  \in \mc{P}.
\end{align}
\end{subequations}
In stochastic control terms, NDA-OPF solves the certainty equivalent control problem for N-RLD \eqref{eqn:DA-SPF}~\cite{Kurzhanski01,Theil57}, by replacing the random quantity $\bd{d}$ by the deterministic quantity $\hd$ and solving the optimization problem. Denote the generation schedule from NDA-OPF by $\ov{\g}$. 

The day ahead schedule $\g$ in the DA-SPF (\eqref{eqn:DA-SPF}) can be decomposed as the nominal dispatch added to a perturbation $\g=(\ov{g}+\bD)^+$ where $\bD \in \R^n$ is the perturbation. The optimal schedule is determined by computing $\bD$. Perturbations are expected to be small since the uncertainty is small, so the {\it perturbed DA-SPF} can be significantly simplified. The simplification comes from the small-$\sigma$ assumption (see Definition \ref{defn:small}), and is manifested in three key observations:
\begin{enumerate}
	\item If $\ov{g}_i <0$, bus $i$ is treated as a source of unlimited energy, since it is shedding energy in the nominal problem (NDA-OPF). If $\ov{g}_i >0$, then the perturbed dispatch $\Del_i$ is not constrained to be positive since $ \overline{g}_i$ is much larger than $\sigma_e \Delta_i$ under the small-$\sigma$ assumption. 
	\item If the line between buses $i$ and $k$ are not congested, then it is not congested in perturbed DA-SPF.
	\item If the line between buses $i$ and $k$ is congested from $i$ to $k$, then it would not become congested from $k$ to $i$ in the perturbed DA-SPF.
\end{enumerate}
Going forward, we assume these observations to hold. This is called the \textbf{small-$\bm{\sigma}$} assumption. We propose the two step algorithm in Algorithm 1. 

{
\noindent \textbf{Algorithm 1:} Procedure to solve Network Risk Limiting Dispatch\\
\textbf{Step 1 (NDA-OPF):} Solve the nominal problem in \eqref{eq:NDA-OPF} using forecast net load and day ahead prices to obtain the nominal schedule $\ov{\g}$ and nominal line flows  $\ov{\bd{f}}$. \\
\textbf{Step 2 (Perturbed DA-SPF):}Solve the DA-SPF (\eqref{eqn:DA-SPF}) for the optimal perturbation $\bD$ by substituting $\g = \ov{\g} + \sigma_e \bD^*$ and appropriately normalizing and reducing the problem using Observations $(1)-(3)$ as
\begin{subequations} \label{eqn:perturb_DASPF}
\begin{align}
\bD^* = \arg \min_{\bD} & \bm{\alpha}^T \bD + \E[\tilde{J}(\bm{\beta},\bd{e})|\hd] \\
\mbox{subject to } & \Delta_i =0 \mbox{ if } \ov{g}_i <0, \\
& \Delta_i > 0 \mbox{ if } \overline{g}_i =0,
\end{align}
\end{subequations}
where 
\begin{subequations} \label{eqn:perturb_OPF}
\begin{align}
\tl{J}(\bm{\beta},\bd{e}) = \min & \tl{\bm{\beta}}^T (\y)^+ \\
\mbox{s.t. } & \y - \bd{e} - \nabla^T \bd{f} = 0 \\
& \bd{K} \bd{f} =0 \\
& f_{ik} < 0 \mbox{ if } \overline{f}_{ik}=c_{ik}, \label{eqn:perb_flow}
\end{align}
\end{subequations}
and $\tl{\beta}_i = \beta_i$ if $\ov{g}_i \geq 0$ and $\tl{\beta}_i=0$ otherwise. The optimal DA-SPF dispatch is then given by $\bd{g}=(\ov{\bd{g}}+\sigma_e \bD)^+$. 

At first glance, \eqref{eqn:perturb_DASPF} seems to be no simpler than the original problem in \eqref{eqn:DA-SPF}. However, note that the network capacity constraints \eqref{eqn:perb_flow} only include the lines that are congested in the nominal problem. In essence, \eqref{eqn:perturb_DASPF} balances a 'left-over' network from solving the nominal problem, and $\eqref{eqn:perb_flow}$ states that if a line is congested in the nominal problem, no more energy is allowed to flow along the direction of congestion. }

The next subsection explores the normalization and reduction process to define the Perturbed DA-SPF for two bus and three bus networks. We show the perturbation $\bm{\Delta}$ is the solution to a set of deterministic equilibrium equations. Then the problem of an arbitrary network with  $n$ buses and a single congestion link is studied 
and we show the general reduction procedure results in an optimal dispatch control under the small-$\sigma$ assumption.

\subsection{Two Bus Network} \label{sec:two_bus}
Consider the two bus network in Fig. \ref{fig:2busD}.
\begin{figure}[ht]
\centering
\psfrag{D1}{$d_1$}
\psfrag{D2}{$d_2$}
\psfrag{C}{$c$}
\includegraphics[scale=0.85]{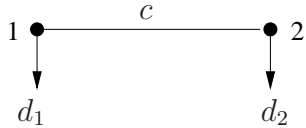}%
\caption{A two-bus network where $c$ is the capacity of the line.}%
\label{fig:2busD}%
\end{figure}
For this network, the day ahead dispatch is a vector $\g=[g_1\;g_2]^T$ of the scheduled generation at each bus. The real-time balancing of the network requires solving an OPF where the injection region is two dimensional.  The RT-OPF becomes
\begin{subequations}\label{eqn:2bus_J}
\begin{align}
J(\bm{\be},\bd{d}-\g) = \min_{\g^R,f} & \bm{\be}^T (\g^{R})^+ \\
												\mbox{subject to } & g_1^{R}+g_1 -d_1 -f =0 \\
												& g_2^{R}+g_2 - d_2+f =0 \\
												& |f|<c,
\end{align}
\end{subequations}
where $f$ is the amount of power flowing from bus 1 to bus 2 and $c$ is the capacity on the line.

To apply Algorithm 1, first solve the NDA-OPF (\ref{eq:NDA-OPF}) for the two bus network. Then, to apply Step 2, we partition $\R^2$ into the five regions in Fig.~\ref{fig:2_diff_al} according to the value of the net demand forecast $\hd$.  Each region is defined by whether the transmission link is congested or not, the direction of congestion, and whether each bus is scheduled to generate power in the nominal problem. The small-$\sigma$ assumption enables inference of these facts with high probability from the solution of the NDA-OPF.
\begin{figure}[ht]
\centering
\psfrag{D1}{$\hat{d}_1$}
\psfrag{D2}{$\hat{d}_2$}
\includegraphics[scale=0.45]{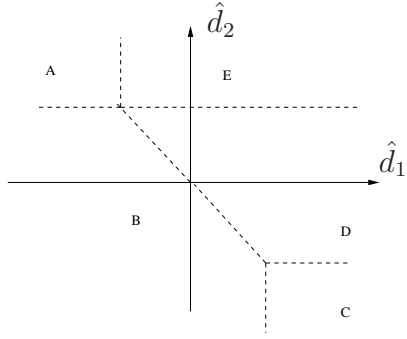}%
\caption{Partition of $\R^2$ with respect to $\hd$ when $\al_1 \leq \al_2$. The small-$\sigma$ assumption means that the actual realization of $\bd{d}$ is in the same region as $\hd$ w.h.p. }%
\label{fig:2_diff_al}%
\vspace{-0.5cm}
\end{figure}

Regions $A$, $B$, $C$ and $D$ reduces to the single bus case as analyzed in Section \ref{sec:single}. In regions $B$ and $D$, since the line capacity is not binding, power can flow from one bus to the other without congestion. In region $A$, bus 1 has excess power and transfer up to capacity to bus $2$, and then reserve is only needed for bus 2. Region $C$ is symmetrical to region $A$. 

For region E in Fig. \ref{fig:2_diff_al}, $\hat{d}_1> -c$ and $\hat{d}_2 > c$. Since buying at bus 1 is cheaper ($\al_1<\al_2$), the SO should transfer up to line capacity $c$ units of energy from bus 1 to bus 2. The NDA-OPF solution is then
\begin{equation*}
\ov{\g}= \bma \hat{d}_1+c \\ \hat{d}_2-c \ebma.
\end{equation*}
At first glance, it seems the two buses are now decoupled and can be treated as two isolated buses since the line between them is congested. However, this viewpoint is not correct due to the {\em two-stage} nature of the problem and congestion being directional. In the two stage dispatch problem, the SO decides in the first stage to purchase some energy based on the forecast and error statistics; however the actual balancing of the network occurs at the second stage. Some averaging of the errors can still occur even if the line from bus 1 to bus 2 is congested. For example, suppose that in real-time  $e_1>0$ and $e_2 <0$. That is,  demand at bus 2 was over-predicted and demand at bus 1 was under-predicted. Due to this configuration, bus 2 needs less than $c$ units of energy from bus 1, and the remaining energy can be utilized to satisfy the under-predicted demand in bus $1$. This represents a flow from bus $2$ to $1$ and does not violate congestion constraints, since the line was congested from bus $1$ to $2$. Due to this property of opposing the congestion direction, we denominate this flow a {\it backflow}. For example, backflow does not arise in region A because bus 1 always has an excess of energy and does not require any energy from bus 2. Similarly for region C.

In region $E$, the small-$\sigma$ assumption implies that $\bd{d} \in E$ with high probability and the line {is not congested} from bus $2$ to bus $1$ (Observation $(3)$).  Assuming that errors $e_1$ and $e_2$ have covariance matrix  
\begin{equation}
\Sigma_e= \sigma_e^2 \Sigma' = \sigma_e^2 \bma \gamma_{11} & \rho \\ \rho & \gamma_{22} \ebma, 
\end{equation}
the optimal dispatch and price of uncertainty in region E are given by:
\begin{thm} \label{thm:2_bus}
Consider the two-bus network in Fig. \ref{fig:2busD}, with prices $\al_1$ and $\al_2$ respectively. Without loss of generality, we assume $\al_1 \leq \al_2$. Under the small-$\sigma$ assumption, the risk limiting dispatch (equation \eqref{eqn:DA-SPF}) is given by
\begin{equation*}
\g^*= \ov{\g} + \sigma_e \bm{\Del^*},
\end{equation*}
where $\ov{\g}= [ \hat{d}_1+c \;\; \hat{d}_2-c]^T$ and $\bm{\Del^*}$ is the unique solution to
\begin{subequations} \label{eqn:equi_2bus}
\begin{align}
\al_1 &= \min(\be_1,\be_2) \Pr( z_1 > \Del_1,z_1+z_2 > \Del_1+\Del_2) \label{eqn:equi_1} \\
\al_2 &= \be_2 \Pr(z_2> \Del_2) \nn \\
	  & + \min(\be_1,\be_2) \Pr(z_2<\Del_2, z_1+z_2 >\Del_1+\Del_2),  \label{eqn:equi_2}
\end{align}
\end{subequations}
where $\z=[z_1 \; z_2]^T=\bd{e}/\sigma_e$. The cost of uncertainty is linear and the price of uncertainty is given by
\begin{align} \label{eqn:2bus_p}
p= & \bm{\al}^T \bm{\Del^*} \\
&+ \min(\be_1,\be_2) \{\E[(z_1+z_2-\Del_1^*-\Del_2^*)^+1(z_2<\Del_2^*)] \nn \\ &+\E[(z_1-\Del_1^*)^+ 1(z_2>\Del_2^*)]\}  + \be_2 \E[(z_2-\Del_2^*)^+] . \nn
\end{align}
\end{thm}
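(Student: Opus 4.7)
The plan is to invoke Algorithm~1 in region $E$ and reduce the Perturbed DA-SPF to a two-dimensional convex program whose first-order conditions are precisely \eqref{eqn:equi_2bus}. Solving the NDA-OPF \eqref{eq:NDA-OPF} at $\hd\in E$ is immediate: since $\al_1 \le \al_2$ and $\hat d_2 > c$, the cheapest feasible deterministic dispatch saturates the line from bus~1 to bus~2, yielding $\ov{\g} = [\hat d_1+c\;\;\hat d_2-c]^T$ with $\ov f = c$. Writing $\g = \ov{\g} + \sigma_e\bD$ (both components sign-free, since $\ov g_i>0$ under small-$\sigma$) and $f = c + \sigma_e\tilde f$, the bus-balance equations reduce to $g_i^R = \sigma_e(z_i - \Delta_i \pm \tilde f)$ with $\z := \bd{e}/\sigma_e$. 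The congestion constraint $f\le c$ becomes $\tilde f \le 0$, while $f \ge -c$ becomes $\tilde f \ge -2c/\sigma_e$ and is inactive in the small-$\sigma$ limit.

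Next, I solve the reduced inner OPF in closed form. Letting $a = z_1-\Delta_1$, $b = z_2-\Delta_2$, and $t = -\tilde f \ge 0$, the perturbed RT cost (per unit of $\sigma_e$) is $\min_{t\ge 0} \be_1(a-t)^+ + \be_2(b+t)^+$. A case split on the signs of $a,b$ (with a subcase $\be_1\lessgtr\be_2$ when both positive parts are active) collapses into the unified piecewise-linear expression
\begin{equation*}
h(a,b) \;=\; \gamma(a+b)^+\mathbf{1}(b\le 0) \;+\; \gamma a^+\mathbf{1}(b>0) \;+\; \be_2 b^+,
\end{equation*}
with $\gamma = \min(\be_1,\be_2)$. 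Here $\be_2 b^+$ is the unavoidable reserve at bus~2 when it is under-forecasted, and the two $\gamma$-terms quantify the savings from the backflow that reroutes excess bus~2 supply back to bus~1 whenever $b<0$.

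The third step is to minimize $F(\bD) := \bm{\al}^T\bD + \E[h(z_1-\Delta_1,z_2-\Delta_2)\mid\hd]$. Because $h$ is convex piecewise-linear in $(a,b)$ and the bivariate Gaussian density of $\z$ is smooth and strictly positive, $F$ is strictly convex and coercive on $\R^2$, so it has a unique minimizer characterized by $\nabla F = \bd{0}$. Differentiating term-by-term, and noting that the $z_2=\Delta_2$ boundary contributions from the two indicator-weighted expectations cancel (both equal $\pm\int(z_1-\Delta_1)^+ f_{\z}(z_1,\Delta_2)\,dz_1$), yields
\begin{align*}
\al_1 &= \gamma\bigl[\Pr(z_1+z_2>\Delta_1+\Delta_2,\,z_2<\Delta_2)+\Pr(z_1>\Delta_1,\,z_2>\Delta_2)\bigr],\\
\al_2 &= \gamma\Pr(z_1+z_2>\Delta_1+\Delta_2,\,z_2<\Delta_2) + \be_2\Pr(z_2>\Delta_2).
\end{align*}
The two events in the first equation are disjoint, and their union is $\{z_1>\Delta_1\}\cap\{z_1+z_2>\Delta_1+\Delta_2\}$ (on $\{z_2<\Delta_2\}$ the sum condition already forces $z_1>\Delta_1$), which recovers \eqref{eqn:equi_1}; \eqref{eqn:equi_2} is then immediate.

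Finally, the price of uncertainty follows by comparing $V^*(\hd)$ to the clairvoyant benchmark. Under small-$\sigma$ the clairvoyant solution also lies in region $E$ with the same congestion pattern, so $V_C^*(\hd+\bd{e}) = \bm{\al}^T\ov{\g} + \sigma_e\bm{\al}^T\z$; taking conditional expectations kills the linear term in $\z$, and subtracting from $V^*(\hd) = \bm{\al}^T\ov{\g} + \sigma_e[\bm{\al}^T\bD^* + \E h(\z-\bD^*)]$ leaves $C_I(\hd) = \sigma_e p$ with $p$ as in \eqref{eqn:2bus_p}. The main technical obstacle is the rigorous small-$\sigma$ justification---showing that, with probability tending to one, the $\g\ge 0$ constraint and the non-selected congestion constraints remain inactive---together with the careful handling of the indicator boundary terms so that they cancel exactly in the FOC computation; these are the steps best relegated to the appendix.
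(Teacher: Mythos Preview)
Your proposal is correct and follows essentially the same route as the paper: substitute $\g=\ov{\g}+\sigma_e\bD$, reduce the RT-OPF to the one-sided backflow problem, solve it in closed form to obtain the same piecewise-linear $\tilde J$ (your $h$), and differentiate the resulting unconstrained objective to get \eqref{eqn:equi_2bus} and then \eqref{eqn:2bus_p}. Your treatment is in fact slightly more explicit than the paper's in two places---the cancellation of the indicator boundary terms in the $\partial/\partial\Delta_2$ computation and the strict-convexity argument for uniqueness---but these are elaborations of the same argument rather than a different approach.
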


Before formally proving Theorem~\ref{thm:2_bus}, we provide an intuitive explanation of the non-linear equations in \eqref{eqn:equi_2bus}.
After subtracting the nominal dispatch choice, the net demands (normalized by $\sigma_e$)  are  $z_1$ and $z_2$ respectively, and only backflow is allowed. The network reduces to a two bus network with a {\em unidirectional} link going from bus 2 to bus 1 (Fig. \ref{fig:2bus_uni}).
\begin{figure}[ht]
\centering
\psfrag{D1}{$z_1$}
\psfrag{D2}{$z_2$}
\includegraphics[scale=0.85]{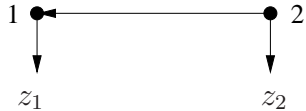}%
\caption{The perturbed network consisting of a unidirectional link and normalized demands $z_1=e_1/\sigma_e$, $z_2=e_2/\sigma_2$ .}%
\label{fig:2bus_uni}%
\end{figure}
 { The left hand side of \eqref{eqn:equi_2bus} can be seen as the cost of purchasing an additional unit of energy at the buses in stage 1, while the right hand side can be seen as the benefit of having that unit of energy at stage 2. Therefore \eqref{eqn:equi_2bus} can be interpreted as balancing the cost and benefit between buying an additional of unit at stage 1. For example, one additional unit of energy at bus 1 is useful if two event occurs: $z_1 >\Del_1$ (bus 1 does not have enough energy) and (b)  $z_1+z_2 > \Del_1 + \Del_2$ (bus 2 does not have enough energy to transfer to bus 1). Since power can be transferred from bus 2 to bus 1 in the perturbed network (Fig. \ref{fig:2bus_uni}), the price of buying an unit of energy at real time is $\min(\be_1,\beta_2)$ and the right hand side of \eqref{eqn:equi_1} is the expected benefit of having that unit of energy available. The price of purchasing that unit of energy at stage 1 is $\alpha_1$. At optimality, equilibrium is achieved between the cost at stage 1 and the expected benefit at stage 2. Similarly, \eqref{eqn:equi_2} describes the equilibrium at bus 2} 

%

Figure~\ref{fig:back_comp} plots the ratio in the average price between a network where backflow is not taken into account and a network that allows backflow as a function of the correlation between errors $e_1$ and $e_2$.
\begin{figure}[ht]
\centering
\includegraphics[scale=0.3]{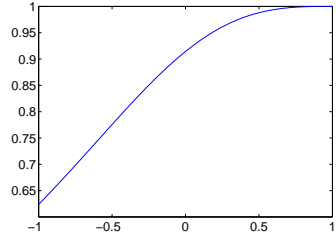}%
\caption{Ratio in prices between using and not using back flow for $\alpha_1=\alpha_2=0.5$ and $\beta_1=\beta_2=1$. Note the curve is always below one since a network with back flow can do no worse than a network without backflow. }%
\label{fig:back_comp}%
\end{figure}
If backflow is not allowed, then the network becomes two isolated buses. The ratio is always less than $1$ since a network with backflow can do no worse than a network without backflow.  The ratio is lowest when the two buses are negatively correlated since backflow averages out the uncertainties in the error. As the two buses become positively correlated, backflow becomes less useful since both errors tend to be the same sign and averaging is less useful.

\begin{proof}[Proof of Theorem \ref{thm:2_bus}] 
Note that Theorem \ref{thm:2_bus} can be proven using the same limiting arguments as given in Appendix \ref{app:price_single} for Theorem \ref{thm:price_single}. For the sake of clarity and brevity, we present a proof without going into the limiting details, but the arguments can be easily make rigorous by following Appendix \ref{app:price_single}. 
 
Any dispatch can be written as $ \ov{\g}+ \sigma_e \bD$. We first prove the optimal $\bD$ is independent of $\sigma_e$. Substituting $\g=\ov{\g}+\sigma_e \bD$, the DA-SPF (Eqn.~\eqref{eqn:DA-SPF})becomes
\begin{subequations} \label{eqn:2_bus_g}
\begin{align}
\mbox{minimize } & \bm{\al}^T (\ov{\g}+\sigma_e \bD)+ \E[J(\bm{\be},\bd{d}-(\ov{\g}+\sigma_e \bD))|\hd] \\
\mbox{subject to } & \ov{\g}+\sigma_e \bD \geq 0 \label{eqn:2bus_pos}.
\end{align}
\end{subequations}
By the small-$\sigma$ assumption, the constraint in Eqn.~\eqref{eqn:2bus_pos} is always satisfied since $\ov{\g} \geq 0$ from the definition of NDA-OPF. The RT-OPF  (Eqn.~\eqref{eqn:2bus_J}) becomes
\begin{subequations} \label{eqn:2bus_J_d}
\begin{align}
& J(\bm{\be},\bd{d}-(\ov{\g}+\sigma_e \bD)) \\
 = & \mbox{minimize } \bm{\be}^T (\g^{R+1})^+ \\
&\mbox{subject to }  g^{R+1}_1+\ov{g}_1+\sigma_e \Del_1-f-\hat{d}_1-e_1 =0 \\
&\hspace{1.5cm} g^{R+1}_2+\ov{g}_2 + \sigma_e \Del_2 +f -\hat{d}_2-e_2=0 \\
&\hspace{1.5cm}   -c \leq f \leq c.
\end{align}
\end{subequations}
Since the nominal flow is $c$, let $f=c-\delta$ with $\delta$ representing the backflow. Substituting the value of $\ov{\g}$ into Eqn.~\eqref{eqn:2bus_J_d}, 
\begin{subequations} \label{eqn:2bus_J_sig}
\begin{align}
& J(\bm{\be},\bd{d}-(\ov{\g}+\sigma_e \bD)) \\
 = & \mbox{minimize } \bm{\be}^T (\g^{R+1})^+ \\
& \mbox{subject to } g^{R+1}_1+ \sigma_e \Del_1 + \delta- e_1 =0 \\
& \vspace{1.5cm} g^{R+1}_2 + \sigma_e \Del_2 -\delta -e_2=0 \\
& \vspace{1.5cm} 0 \leq \delta \leq 2c.
\end{align}
\end{subequations}
By the assumption that the line does not congest from bus $2$ to bus $1$, the constraint $\delta<2c$ is always satisfied and can be dropped. Normalizing Eqn.~\eqref{eqn:2bus_J_sig} by $\sigma_e$ gives
\begin{subequations} \label{eqn:2bus_J_nom}
\begin{align}
& J(\bm{\be},\bd{d}-(\ov{\g}+\sigma_e \bD)) \\
= & \sigma_e \mbox{minimize } \bm{\be}^T (\g^{R+1})^+ \\
&\mbox{subject to } g^{R+1}_1+  \Del_1 + \delta- z_1 =0 \\
& \hspace{1.5cm} g^{R+1}_2 +  \Del_2 -\delta -z_2=0 \\
& \hspace{1.5cm} \delta \geq 0,
\end{align}
\end{subequations}
where the optimization variables $\g^{R+1}$ and $\delta$ have been normalized by $\sigma_e$ and $z_i:=e_i/\sigma_e$. Let $\tl{J}=J /\sigma_e$, and note that $\tl{J}$ only depdent of $\bm{\be}$ and $\bD$. Combining \eqref{eqn:2_bus_g} and \eqref{eqn:2bus_J_nom}, $\bD$ solves the unconstrained optimization problem
\begin{equation}
\min_{\bD} \; \al^T \bD + E[\tl{J}(\bm{\be},\bD)].
\end{equation}
To solve this optimization problem, we need the gradient of $E[\tl{J}(\bm{\be},\bD)]$ with respect to $\bD$. The optimization problem can be analytically solved to yield
\begin{align*}
\tl{J}(\bm{\be},\bD) &=
\begin{cases}
 \min(\be_1+\be_2) (z_1+z_2-\Del_1-\Del_2) \\
\;\mbox{ if } z_1+z_2 > \Del_1+\Del_2, z_2 < \Del_2 \\
 \min(\be_1+\be_2) (z_1-\Del_1)+\be_2 (z_2-\Del_2) \\
\;\mbox{ if } z_1 > \Del_1, z_2 > \Del_2 \\
\be_2 (z_2-\Del_2) \\
\; \mbox{ if } z_1 < \Del_1, z_2 > \Del_2\\
0 \mbox{ otherwise}
\end{cases} \\
&= \min(\be_1,\be_2)[(z_1+z_2-\Del_1-\Del_2)^+ 1(z_2<\Del_2) \\
&+(z_1-\Del_1)^+ 1(z_2>\Del_2)] + \be_2 (z_2-\Del_2)^+.
\end{align*}
Using the linearity of expectation and taking derivatives with respect to $\bD$ in $\al^T \bD + E[\tl{J}(\bm{\be},\bD)]$ gives \eqref{eqn:equi_2bus}.

Next we prove the price of uncertainty is given by Eqn.~\eqref{eqn:2bus_p}. The value of full knowledge optimization problem is $\E[J(\bm{\al},\bd{d})]$. The error is zero mean and by the small-$\sigma$ assumption, $\E[J(\bm{\al},\bd{d})]=\bm{\al}^T \ov{\g}$ where $\ov{g}$ is the nominal solution. The cost of uncertainty is
\begin{align*}
u &=\bm{\al}^T (\ov{g}+\sigma_e \bD)+ \E[J(\bm{\be},\bd{d}-(\ov{\g}+\sigma_e \bD))] - \E[J(\bm{\al},\bd{d})] \\
 & =\sigma_e (\bm{\al}^T \bD + \E[\tl{J}(\bm{\be},\bD)] \\
 & = \sigma_e p.
\end{align*}
\end{proof}
\subsection{N-bus Network with a Single Congested Line}
\begin{figure}[ht]
\centering
\includegraphics[scale=0.1]{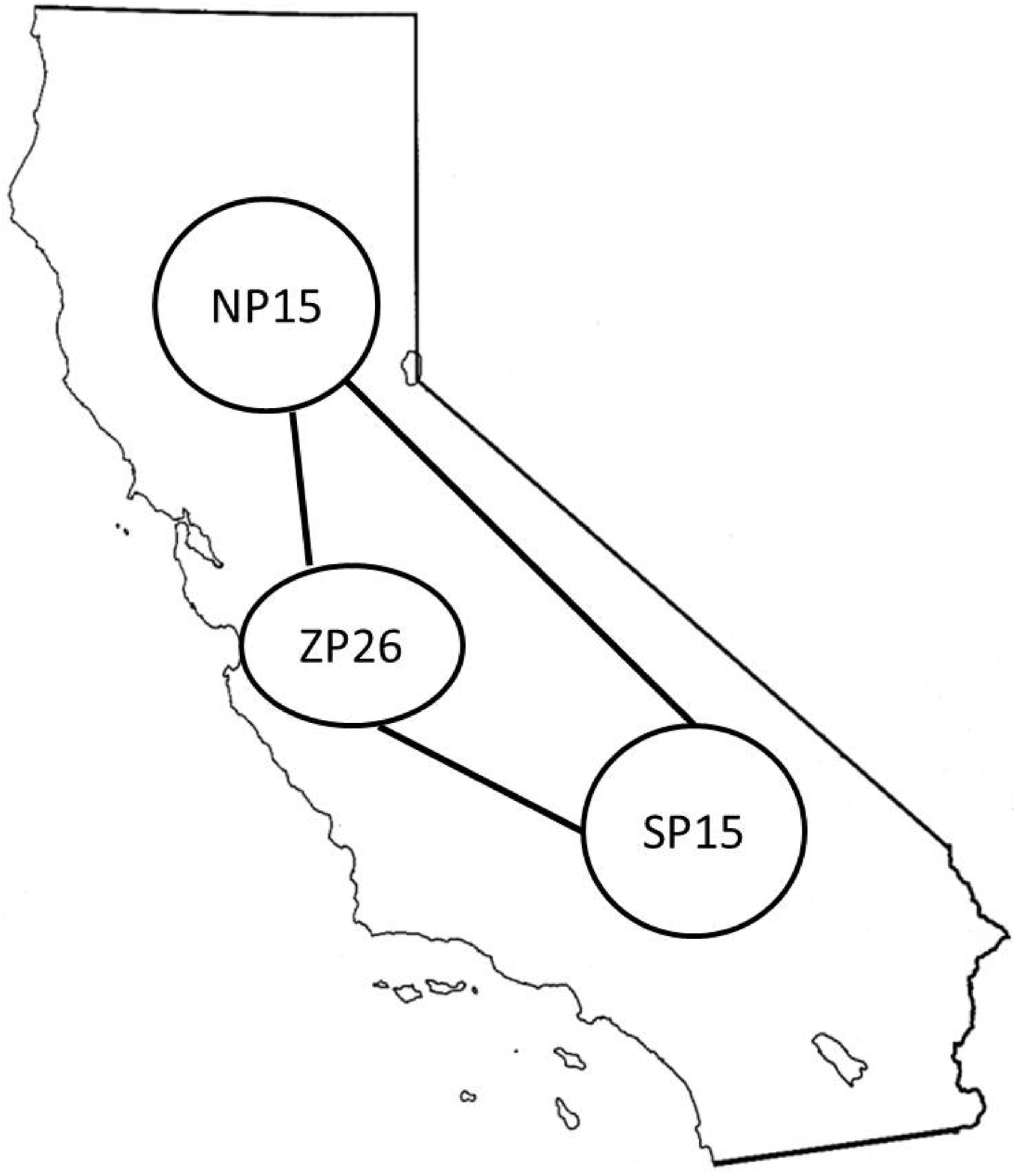}%
\caption{A zonal map of the California transmission network under CAISO control. The subnetwork within a zone are uncongested under normal operation. The tie lines to other WECC areas are not shown.}%
\label{fig:CA_zone}%
\end{figure}
Most networks consists of a large number of buses and lines, but under normal operating conditions, only very few lines are congested. For example, the California transmission network can be thought as divided into three zones connected by major transmission lines shown in Figure~\ref{fig:CA_zone} and the flows within a zone are unrestricted \cite{CAISO_report}. The zonal grouping in CAISO was designed utilizing the idea of collapsing together buses connected by uncongested transmission lines in a deterministic OPF.  We formalize and extend this intuitive concept for ND-RLD by showing that a general network with a single congested link reduces to  a two bus problem under mild to moderate uncertainty. More concretely, assume the line from bus $1$ to bus $2$ is congested, then 
\begin{thm} \label{thm:general}
Given a generic power network. Let $\ov{g}$ and $\ov{f}$ be the nominal generation and nominal line flows obtained by solving the nominal OPF (equation \eqref{eq:NDA-OPF}). Under the small-$\sigma$ assumption, suppose that $\ov{f}_{12}=c_{12}$ is the only congestion in the network, then the following holds:
\begin{enumerate}
	\item There are at most two nodes with positive generation. That is, $\ov{g}_i >0$ for at most two $i$. Furthermore, if $\ov{g}_1 \leq 0$, then only one other bus has positive generation.
	\item The risk limiting dispatch (equation \eqref{eqn:DA-SPF}) takes the form
	\begin{equation*}
	g^* = (\ov{\g}+\bm{\Del})^+,
	\end{equation*}
	where $\Del_i \neq 0$ only if $\ov{g}_i > 0$.
	\item If $\be_i=\be_k=\be$ for all $i,k$, then optimization problem reduces to an equivalent problem over a congested two node network with congestion from bus $1'$ to $2'$ with correlated errors. Let $k \neq 1$ be the bus with positive generation. Then the first stage costs are $\al_1'=\al_1$ and $\al_2'=(\frac{\al_k}{\gamma_k}-\gamma_k \al_1)$ and the errors are given by
	\begin{equation}
	e_1'=e_1+\sum_{i=3}^n \gamma_i e_i \;\;\;\;\;\; e_2'=e_2+\sum_{i=3}^n (1-\gamma_i) e_i,
	\end{equation}
	where $\gamma_i\in [0,1]$ are determined by the topology of the network and can be calculated by \eqref{eqn:g_flow} and $\eqref{eqn:gamma}$. 
\end{enumerate}
\end{thm}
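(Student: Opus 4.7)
\textbf{Proof Proposal for Theorem \ref{thm:general}.}

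The plan is to attack the three claims sequentially, reducing each to an LP / DC--power-flow structural argument about the nominal problem, then lifting the structure to the perturbed stochastic problem via the small-$\sigma$ assumption. First I would set up notation cleanly via PTDFs. In the DC model, for any vector of net injections $\bd{y}\in\R^n$ satisfying $\sum_i y_i=0$ (power balance), the resulting flow on a given line $(i,k)$ is a linear functional $\sum_j \text{PTDF}^{(ik)}_j y_j$. Let $\gamma_j:=\text{PTDF}^{(12)}_j$ for the (single) congested line from bus $1$ to bus $2$. By the standard properties of PTDFs on a connected network, $\gamma_1=1$, $\gamma_2=-1$ (equivalently $1-1$) and $\gamma_j\in[0,1]$ for all other nodes, so writing $\gamma_j$ and $1-\gamma_j$ naturally partitions node $j$'s injection between the two sides of the congested line. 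Equations \eqref{eqn:g_flow}--\eqref{eqn:gamma} will just be the explicit PTDF formulas.

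For part (1), I would view the NDA--OPF \eqref{eq:NDA-OPF} as an LP on the variables $\bd{g}$, with equality constraints from DC power balance, a single active inequality $f_{12}=c_{12}$, and a cost that is linear on the octant $\{\bd{g}\geq 0\}$ (with zero cost for negative entries thanks to free disposal). Since only one line is congested, the shadow-price structure gives a single locational marginal price on each side of that line (buses sharing a common LMP per ``zone''). Only the cheapest $\alpha_j$ in each zone can appear as a strict producer at an optimal vertex, yielding at most two producers overall; moreover, if $\ov{g}_1\le 0$ then side~1 is not exporting through the congested line at its nominal value, so the congestion does not genuinely separate the problem into two producing zones and only one producer is needed globally. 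This last step is the most delicate of (1) and I expect to argue it by contradiction against optimality plus the assumption that line $(1,2)$ is the \emph{only} tight capacity.

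For part (2), I would invoke Algorithm~1 directly: by construction of \eqref{eqn:perturb_DASPF}, $\Del_i=0$ whenever $\ov{g}_i<0$. It remains to rule out $\Del_i\ne 0$ at nominally idle buses $\ov{g}_i=0$. For such a bus $i$, use strict LP optimality from part (1): if $i$ were strictly preferable for any incremental unit, then the nominal LP would already have set $\ov{g}_i>0$, contradiction. Equivalently, perturbing through an already-active bus in the same zone incurs cost $\al_k<\al_i$ and produces the same effect on line flows up to free intra-zone reshuffling, so the KKT conditions of \eqref{eqn:perturb_DASPF} force $\Del_i=0$. The ``$(\cdot)^+$'' in $\g^*=(\ov{\g}+\sigma_e\bD)^+$ is vacuous at active buses by the small-$\sigma$ assumption.

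For part (3), with $\ov{g}_1>0$ and exactly one other producer $\ov{g}_k>0$, the only free perturbation variables are $\Del_1$ and $\Del_k$. Using $\gamma_j$ to split every bus's error $e_j$ into a component $\gamma_j e_j$ feeding ``side~1'' and $(1-\gamma_j)e_j$ feeding ``side~2,'' the linearized power-balance plus congestion-direction constraint \eqref{eqn:perb_flow} collapses into exactly the two aggregated equations of a two-node uni-directional network with normalized loads $e_1'=e_1+\sum_{j\ge 3}\gamma_j e_j$ and $e_2'=e_2+\sum_{j\ge 3}(1-\gamma_j)e_j$. The change of variables that turns a perturbation at bus $k$ into a perturbation at the aggregated bus~$2'$ is $\Del_{2'}=\gamma_k\Del_k$ (since only the fraction $\gamma_k$ of $\Del_k$ actually reaches side~2 through the network); matching the day-ahead cost $\alpha_1\Del_1+\alpha_k\Del_k$ to the equivalent $\alpha_1'\Del_1+\alpha_2'\Del_{2'}$ under this change of variables forces $\alpha_1'=\alpha_1$ and $\alpha_2'=\alpha_k/\gamma_k-\gamma_k\alpha_1$, where the subtracted term accounts for the fact that a unit increase at bus~$k$ also alters side~1's net injection by $1-\gamma_k$, which can be compensated for free by adjusting bus~$1$. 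The hypothesis $\be_i\equiv\be$ is what lets the real-time stage be written in the same $\tl\be$-independent form as in Theorem~\ref{thm:2_bus} so that Theorem~\ref{thm:2_bus} applies verbatim to the reduced network.

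The main obstacle I anticipate is the accounting in part (3): correctly booking the PTDF-weighted contributions of bus~$k$ and of the errors $e_j$ so that both the cost and the two-bus power balance match the reduced problem simultaneously, and verifying that the ``backflow'' constraint on line $(1,2)$ in the original network is exactly the congestion constraint of the reduced two-bus network. The small-$\sigma$ limit itself is routine once parts (1)--(2) are in hand, and I would handle it by the same limiting arguments as in Appendix~\ref{app:price_single} to certify that the congestion pattern and sign pattern of $\bd{g}$ are unchanged with probability $1-o(1)$.
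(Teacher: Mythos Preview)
Your strategy is close to the paper's, but part~(1) contains a real gap that also undermines part~(3). You claim that with one congested line ``the shadow-price structure gives a single locational marginal price on each side of that line (buses sharing a common LMP per `zone').'' That holds only in a radial network. In a meshed network the LMP at bus $j$ is $\lambda_j=\lambda_{\text{ref}}+\mu\,\gamma_j$, so buses with distinct PTDFs $\gamma_j$ have \emph{distinct} LMPs---there are no discrete zones, and ``cheapest in each zone'' is not well-defined. What you actually need, and what the paper isolates as a separate linear-algebra claim proved directly from the fundamental-flow matrix $\bd A$, is that the dual vector lies on a \emph{two-parameter} family $\lambda_j=\gamma_j\lambda_1+(1-\gamma_j)\lambda_2$ with $\gamma_j\in[0,1]$. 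Complementary slackness gives $\ov g_j>0\Rightarrow\lambda_j=\al_j$, and two free parameters can generically hit at most two prescribed targets $\al_j$. Your PTDF setup already encodes this affine structure; you just have to state and use the two-degree-of-freedom conclusion rather than the incorrect two-zone picture. The refinement when $\ov g_1\le 0$ then follows from $\mu\ge 0$ forcing $\lambda_2\ge\lambda_1$ together with $\lambda_1\le\al_1$.

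The same structural fact is the engine of part~(3). Your primal change of variables handles the day-ahead cost $\al_1\Del_1+\al_k\Del_k$, but you also need the \emph{second-stage} value $\tl J(\bm\be,\cdot)$ to depend on the $n$-vector of errors and controls only through the two aggregates $\bm\gamma^T(\cdot)$ and $(\bd 1-\bm\gamma)^T(\cdot)$. This is not automatic from a primal substitution, because in the RT-OPF every bus can purchase. The paper obtains it by writing the dual of the perturbed RT-OPF and applying the identical two-parameter decomposition to its multipliers; the hypothesis $\be_i\equiv\be$ is precisely what collapses the $n$ dual box constraints $0\le\lambda_j\le\be_j$ down to the two constraints of a two-bus dual. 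Your primal route can be completed, but you will end up reproving this dual collapse. One bookkeeping slip: with the convention $\gamma_1=1,\ \gamma_2=0$ the correct substitution is $\Del_{2'}=(1-\gamma_k)\Del_k$, not $\gamma_k\Del_k$; carrying this through yields $\al_2'=(\al_k-\gamma_k\al_1)/(1-\gamma_k)$, which matches the paper's three-bus example $\al_2'=2\al_3-\al_1$ at $\gamma_3=\tfrac12$.
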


{Point 1) in Theorem \ref{thm:general} seems strange since it is highly unlikely that only two generators would be generating in a power network. This result is comes from the assumption that the prices are linear in the power generated, which is used here to simplify the presentation. In practice, cost functions are piecewise linear or quadratic. If piecewise linear cost functions are used, then Theorem \ref{thm:general} 1) is modified to stating that there are at most two generators operating at their marginal cost \cite{Kirschen04}; if quadratic (or other convex continuous increasing) cost functions are used, Theorem \ref{thm:general} is modified to stating that there are at most two different marginal costs among the generators. The details of the derivation is given in the Appendix. The overall message of Theorem \ref{thm:general} remains unchanged in each case: in a network with one congested link, the risk limiting dispatch can be calculated by considering a two-bus network obtained from the original n-bus network.}   

{The proof of this theorem is somewhat technical and is given in the appendix.  The theorem states that the network can be collapsed into a single bus or a two bus network, utilizing an appropriate averaging of the net demands. To understand how to calculate the bus averaging weights $\gamma_i$, it is convenient to simplify \eqref{eqn:DA-SPF} (with cost $\bm{\beta}$) by considering {\it fundamental flows} \cite{Bondy08}.  Pick one spanning tree in the network. This spanning tree has $n$ nodes and $n-1$ edges. The flows on these $n-1$ edges is called a fundamental flow, denoted by $\tl{\bd f} \in \R^{n-1}$.  These flows are fundamental in the sense that any flows, $\bd f$  in the network can be written in the form $ \bd f=\bd R \tl{\bd f}$, where $\bd R \in \R^{m \times n-1}$ is a constant matrix only depending on the chosen spanning tree.

The constraint \eqref{eqn:Kirchoff} can be eliminated and \eqref{eqn:RT_X} reduces to:
\begin{subequations}\label{eqn:J_full_A}
\begin{align}
J^*(\bm{\be},\bd{x}) =\min & \bm{\be}^T (\g^R)^+ \\
 \mbox{subject to } & \g^R-\bd{x} - \bd{A} \bd{\tl{f}} =0 \label{eqn:nabla_A} \\
 & |\bd{R} \bd{f}| \leq \bd{c} \label{eqn:capacity_A}.
\end{align}
\end{subequations}}
  Let $\bd{a}_i^T$ be the $i$th row of $\bd{A}$ for $i=1,\dots,n$.  For each node $i=3,\dots,n$ in the network, let $\bd{\tl{f}}^{(i)}$ be set of fundamental flows that solve the following set of equations
\begin{subequations} \label{eqn:g_flow}
\begin{align}
f_1^{(i)} &= 0 \\
\bd{a}_i^T \bd{\tl{f}}^{(i)} &= -1 \\
\bd{a}_k^T \bd{\tl{f}}^{(i)} &=0, \; k \neq i, k \geq 3.
\end{align}
\end{subequations}
In matrix form, $\bd{\tl{f}}^{(i)}$ solves
\begin{equation*}
\bma
1 \; 0 \; 0 \; \cdots \; 0 \\
\bd{A}_2
\ebma \bd{\tl{f}}^{(i)} = \tl{\bd{A}} \bd{\tl{f}}^{(i)}= - \bd{h}_{i-1},
\end{equation*}
where $\bd{A}_2$ is the $(n-2) \times (n-1)$ matrix obtained by removing the first two rows of $\bd{A}$ and $\bd{h}_{i-1}$ is a vector with entry $i-1$ being 1 and all other entries $0$ . Inverting gives $\bd{\tl{f}}^{(i)}= -\tl{\bd{A}}^{-1} \bd{h}_{i-1}$ and
\begin{equation} \label{eqn:gamma}
\gamma_i= \bd{a}_1^T \bd{\tl{f}}^{(i)}.
\end{equation}

Next we apply Theorem \ref{thm:general}  to a three bus single cycle network with equal admittance on each line. Let the prediction $\hd$ be such that the line from bus 1 to bus 2 is congested. That is, $\ov{f}_{12}=c_{12}$ in the nominal problem. There are four possible congestion patterns\footnote{Other patterns are possible, but  occur for a set of $\hat{d}$ that is of measure zero} as listed in Figure \ref{fig:nom}. Bus $i$ is labeled by the sign of $\ov{g}_i$.
\begin{figure}[ht]
\centering
\subfigure[$ $]{
\includegraphics[scale=0.7]{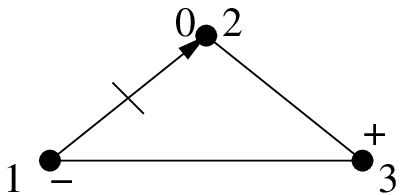}
\label{fig:nom1}}%
\subfigure[$ $]{
\includegraphics[scale=0.7]{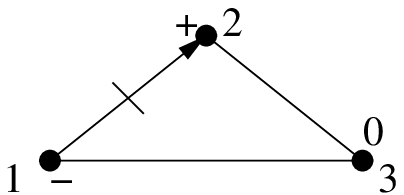}
\label{fig:nom2}}
\subfigure[$ $]{
\includegraphics[scale=0.7]{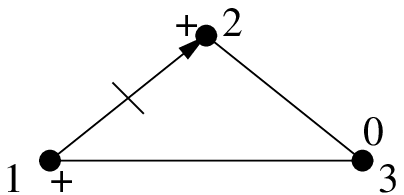}
\label{fig:nom4}}
\subfigure[$ $]{
\includegraphics[scale=0.6]{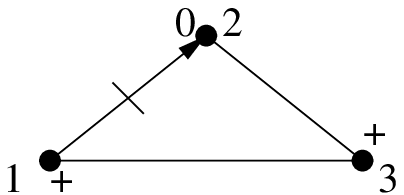}\label{fig:nom3}}
\caption{Possible sign patterns of $\ov{g}$ when a single line is congested.}%
\label{fig:nom}%
\end{figure}
Figure \ref{fig:equi_3} shows the equivalent two bus networks for each of the networks in Fig. \ref{fig:nom} after applying Theorem \ref{thm:general}. The networks in Fig. \ref{fig:equi_3} are labeled by the first stage costs, the sign patterns and the forecasted errors at each of the nodes. Let $\bD'$ be the solution to the two bus networks in Fig. \ref{fig:equi_3}. Then the controls $\bD$ for the original problem are given in each of the networks in Fig. \ref{fig:equi_3}.
\begin{figure}[h!]
\centering
\subfigure[$\Del_3=\Del_2'$]{
\psfrag{a1}{$\al_1$}
\psfrag{a2}{$\al_3$}
\psfrag{1}{$-$}
\psfrag{2}{$+$}
\psfrag{D1}{$e_1+\frac12 e_3$}
\psfrag{D2}{$e_2+\frac12 e_3$}
\includegraphics[scale=0.7]{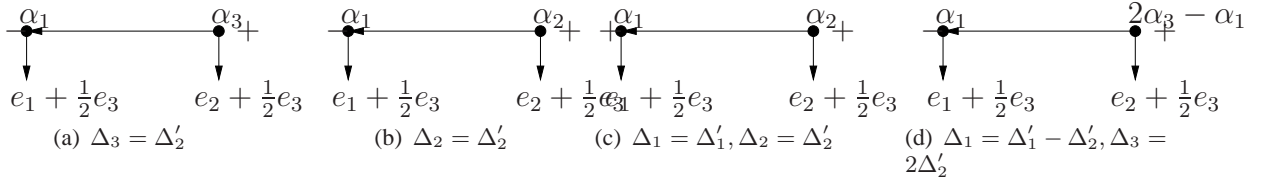}} \hspace{0.5cm}
\subfigure[$\Del_2=\Del_2'$]{
\psfrag{a1}{$\al_1$}
\psfrag{a2}{$\al_2$}
\psfrag{1}{$-$}
\psfrag{2}{$+$}
\psfrag{D1}{$e_1+\frac12 e_3$}
\psfrag{D2}{$e_2+\frac12 e_3$}
\includegraphics[scale=0.7]{equi_3.eps}}
\subfigure[$\Del_1=\Del_1',\Del_2=\Del_2'$]{
\psfrag{a1}{$\al_1$}
\psfrag{a2}{$\al_2$}
\psfrag{1}{$+$}
\psfrag{2}{$+$}
\psfrag{D1}{$e_1+\frac12 e_3$}
\psfrag{D2}{$e_2+\frac12 e_3$}
\includegraphics[scale=0.7]{equi_3.eps}} \hspace{0.5cm}
\subfigure[$\Del_1=\Del_1'-\Del_2', \Del_3=2 \Del_2'$]{
\psfrag{a1}{$\al_1$}
\psfrag{a2}{$2\al_3-\al_1$}
\psfrag{1}{$-$}
\psfrag{2}{$+$}
\psfrag{D1}{$e_1+\frac12 e_3$}
\psfrag{D2}{$e_2+\frac12 e_3$}
\includegraphics[scale=0.7]{equi_3.eps}}
\caption{The equivalent perturbed networks for the networks in Fig. \ref{fig:nom} respectively. The left bus is $1'$ and the right bus is $2'$. The back flow is only allowed form $2'$ to $1'$.}%
\label{fig:equi_3}%
\end{figure}

Note the result in this section can be extended to the case of a network with multiple congested lines. Namely, given a network with $K$
congested lines, it can be reduced to an equivalent network with $K + 1$ buses \cite{Zhang13cdc}. The methods for multiple congested
lines are the same for a single congested line, although the mathematical details are more cumbersome to handle.

\section{Simulation Results} \label{sec:simulations}

This section explores various numerical examples using the IEEE 9-bus benchmark network. In particular we compare the performance of ND-RLD with utilizing the standard $3-\sigma$ rule. We also compute the price of uncertainty numerically and compare it to the theoretical prediction. 
\vspace{-0.5cm}
\subsection{Uncongested Network}
Many practical networks  have line capacities that are much larger than the typical power flows. For these networks, they are well approximated by a single bus network. For example, consider the IEEE 9-bus network in Figure~\ref{fig:9bus}.
\begin{figure}[ht]
\centering
\includegraphics[scale=0.2]{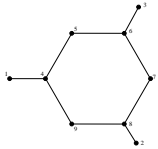}%
\caption{IEEE 9-bus benchmark network. Bus 1,2,3 are generators and the rest of the buses are loads.}%
\label{fig:9bus}%
\vspace{-0.1cm}
\end{figure}
The nominal generation and demands from the data included with this benchmark network \cite{IEEEbenchmark,Zimmerman09} is shown in Table~\ref{tab:branch}. Note that line flows  are significantly smaller than transmission line capacities. Therefore, under moderately high penetration, the network can be thought as a network operating without capacity constraints.

Up to this point we have used the DC power flow model, while in reality power flow is AC. It is known that for transmission networks, due to the low $R/X$ ratios of the transmission lines, DC and AC power flows yields similar answers. This is confirmed in our simulations where the difference in performance of using the risk limiting dispatch under DC and AC power flow models is minimal. Therefore it is sufficient to use the simpler DC flow model to obtain the dispatch.

\begin{table}[ht]
\centering
\begin{tiny}
\begin{tabular}{c|c|c|c|c|c|c|c|c|c}
Bus & 1 & 2 & 3 & 4 & 5 & 6 & 7 & 8 & 9 \\
\hline
DC Flow & 86.6 & 134.4 & 94.1& 0 &-90 & 0 & -100 & 0 & -125 \\
AC Flow & 89.8 & 134.3 & 94.2 & 0 & -90 & 0 & -100 & 0 & -125
\end{tabular}
\caption{All units are MW. Negative numbers are the demands at buses 5, 7, and 9. The generations needed at buses 1, 2, and 3 to meet these demands under both DC flow and AC flow are shown.}
\label{tab:bus}
\end{tiny}
\vspace{-0.5cm}
\end{table}
\begin{table}[ht]
\centering
\begin{tiny}
\begin{tabular}{c|c|c|c|c|c|c|c|c|c}
From bus & 1 & 4 & 5 & 3 & 6 & 7 & 8 & 8 & 9 \\
\hline
To bus & 4 & 5 & 6 & 6 & 7 & 8 & 2 & 9 & 4 \\
\hline
DC Flow & 86.6 & 33.7 & -56.3 & 94.1 & 37.8 & -62.2 & - 134.4 & 72.2 & -52.8 \\
\hline
AC Flow & 89.8 & 35.2 & -55.0 & 94.2 & 38.2 & -61.9 & -134.3 & 72.11 & -54.3 \\
\hline
Capacity & 250 & 250 & 150 & 300 & 150 & 250 & 250 & 250 & 250
\end{tabular}
\caption{All units are MW. Both DC and AC power flows on each line of the network is shown. Capacities are the long term emergency rating of the line. The network is uncongested.}
\label{tab:branch}
\end{tiny}
\vspace{-0.6cm}
\end{table}

To analyze the performance of the risk limiting dispatch derived in Section \ref{sec:single}, we compare it to two other dispatches. The first one is the currently used $3-\sigma$ dispatch, and the second one is the oracle dispatch where the actual realization of the wind is known at stage 1. We assume that all the generating buses have a first stage cost\footnote{The nominal generations are determined by an OPF problem, and every generator with non-zero generation has the same marginal cost. This can be thought as $\al$.} $\al=1$ and all buses have the same second stage cost $\be$. For simplicity, the prediction errors are generated as i.i.d. zero mean Gaussian random variables with variance $\sigma^2$. The predictions $\hd$ is taken to the nominal demands in Tab. \ref{tab:bus}.

The risk limiting dispatch is derived by viewing the network as a single bus. For actual operation, the amount of reserves to put at each buses in the network need to be determined. Here we spread the reserves equally among the three generating buses(buses 1,2 and 3). From \eqref{eqn:rld} and the fact that the prediction errors are independent, the single bus risk limiting dispatch is $\sum_{i=1}^9 \hat{d}_i + \Del$ where $\Del=\sqrt{9}\sigma Q^{-1}(\frac{\al}{\be})$. The network risk limiting dispatch is given by
\begin{align*}
\g_{\rld} = \ov{g}+\bD = & \bma 86.6 & 134.4 & 94.1 & 0 & \dots & 0 \ebma^T \\
&+ 3 \sigma Q^{-1}(\frac{\al}{\be}) \bma \frac13 & \frac13 & \frac13 & 0 & \dots & 0 \ebma^T.
\end{align*}
The $3-\sigma$ control purchases a reserve of 3 times the standard deviation for each bus in the network, or $3\cdot 9 \cdot \sigma$. Again we spread out the $3-\sigma$ dispatch over the three generating nodes as
\begin{align*}
\g_{\rld} = \ov{g}+\bD = & \bma 86.6 & 134.4 & 94.1 & 0 & \dots & 0 \ebma^T \\
&+ 9 \sigma Q^{-1}(\frac{\al}{\be}) \bma 1 & 1 & 1 & 0 & \dots & 0 \ebma^T.
\end{align*}
We simulate the cost for both the DC and AC power flows.

Figure \ref{fig:total15_iso} plots the total cost of the three dispatches for $\be=1.5 \al$. As we can see the risk limiting dispatch performs much better than the $3-\sigma$ dispatch. There are two reasons why the $3-\sigma$ dispatch or rules like it perform badly. The first is that the $3-\sigma$ rules is too conservative since it does not take into account the actual cost of the second stage; the second reason is that the $3-\sigma$ dispatch ignores the potential benefit of averaging between the prediction errors by treating the different buses as isolated nodes. In contrast, risk limiting dispatch takes these two points into consideration.
\begin{figure}[ht]
\centering
\includegraphics[scale=0.26]{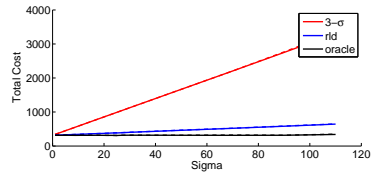}%
\caption{Total costs for $\be=1.5 \al$ as a function of $\sigma$. The red, blue and black lines are the total cost for the $3-\sigma$, risk limiting, and the oracle dispatches respectively. The solid lines are the costs under DC flow while the dotted lines are for AC flows.}%
\label{fig:total15_iso}%
\vspace{-0.3cm}
\end{figure}
Figure \ref{fig:total15_rld} is a zoomed in version of Fig. \ref{fig:total15_iso} by plotting the total cost only for the risk limiting dispatch and the oracle dispatch. The cost for the oracle dispatch is constant at 315  up until $\sigma=80$. This is expected since the predicted total demand is $315$ MW, and the prediction errors are zero mean, so the errors averages out. At higher $\sigma$, the capacities in the network become binding and the cost goes up since not all errors can be averaged. The cost for the risk limiting dispatch is essentially linear for all $\sigma$'s. Furthermore, the slope of the cost is (very close to) the price of uncertainty calculated in the earlier sections. 

 A lower bound for the minimum total cost is the total cost of applying the risk limiting dispatch to a network with infinite capacities, since an infinite capacity network has lower cost than a finite capacity one and the risk limiting dispatch is optimal for the former. From Figure \ref{fig:total15_rld}, this lower bound is almost met. Thus the risk limiting dispatch is close to optimal and our assumption of viewing an uncongested network as a single bus network is valid.
\begin{figure}[ht]
\centering
\includegraphics[scale=0.26]{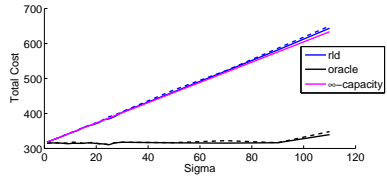}%
\caption{Total costs for $\be=1.5 \al$ as a function of $\sigma$. The blue and black lines are the total cost for the risk limiting and the oracle dispatches respectively. The purple line is the cost of the RLD when applied to an infinite capacity network, which is a lower bound for the minimum cost of the finite capacity network. The slopes of the blue and the purple lines represent the price of uncertainties.}%
\label{fig:total15_rld}%
\end{figure}
The slopes of the lines gives the price of uncertainties. As expected, the price of uncertainty for the oracle dispatch is $0$ since the actual realization is known at the first stage. The price of uncertainty of the risk limiting dispatch closely matches that of the single bus price of uncertainty, while the $3-\sigma$ price is much higher.
\subsection{Congested Network}
To construct a congested network, the network in Fig. \ref{fig:9bus} is modified by increasing the nominal load at bus 5 to $150$ MW and reducing the capacity of the line connecting bus 5 and 6 to $75$ MW. Then the line from bus 6 to bus 5 is congested. There are two different first stage costs $\al_1$ and $\al_2$ and these are given by the marginal costs of the generators. Let $\al=\frac12 (\al_1+\al_2)$ and we normalize all cost by $\al$.
Figure \ref{fig:total_cong_15} plots the total cost of the three dispatches for $\be=1.5 \al$. Again, we see the risk limiting dispatch performs much better than the $3-\sigma$ dispatch.
\begin{figure}[ht]
\centering
\includegraphics[scale=0.26]{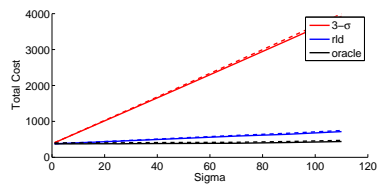}%
\caption{Total costs for $\be=1.5 \al$ as a function of $\sigma$. The red, blue and black lines are the total cost for the $3-\sigma$, risk limiting, and the oracle dispatches respectively. The solid lines are the costs under DC flow while the dotted lines are for AC flows. The purple line is the cost of the rld when applied to a network where only one line has finite capacity, namely the line congested under the nominal flows. This is a lower bound for the minimum cost of the finite capacity network. The slopes of the blue and the purple lines represent the price of uncertainties.}%
\label{fig:total_cong_15}%
\vspace{-0.5cm}
\end{figure}
Figure \ref{fig:total_cong_rld_15} is a zoomed in version of Fig. \ref{fig:total_cong_15} with the total cost only for the risk limiting dispatch and the oracle dispatch. As expected, the cost of the oracle dispatch is constant over a wide range of $\sigma$'s. The cost of the risk limiting dispatch is linear and very close to its lower bound. The lower bound is obtained by applying the risk limiting dispatch to a network with only one finite capacity line, namely the line congested under the nominal flows. Figure \ref{fig:total_cong_rld_15} shows that modeling a network with one congested line as a two bus network is very accurate.
\begin{figure}[ht]
\centering
\includegraphics[scale=0.26]{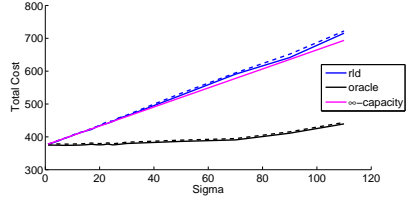}%
\caption{Total costs for $\be=1.5 \al$ as a function of $\sigma$. The blue and black lines are the total cost for the risk limiting and the oracle dispatches respectively. The purple line is the cost of the RLD when applied to a network where only one line has finite capacity, namely the line congested under the nominal flows. This is a lower bound for the minimum cost of the finite capacity network. The slopes of the blue and the purple lines represent the price of uncertainties.}%
\label{fig:total_cong_rld_15}%
\vspace{-0.5cm}
\end{figure}

Figure \ref{fig:total_cong_compare} shows the difference in cost of assuming there is no congestion in the network and the correct dispatch solution taking the congestion into account. The former calculation ignores the congestion information in the network. As expected, the later dispatch performs better since it takes into account the congestion in the network. 
\begin{figure}[ht]
\centering
\includegraphics[scale=0.26]{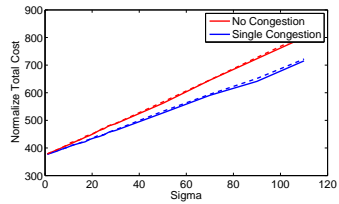}%
\caption{Total costs for $\be=1.5 \al$ as a function of $\sigma$. The blue line is the total cost for the risk limiting dispatch developed in Section \ref{sec:Networks}. The red line is the total cost if the risk limiting dispatch derived for the congested network in Section \ref{sec:single} is used. }%
\label{fig:total_cong_compare}%
\vspace{-0.5cm}
\end{figure}
\section{Conclusion} \label{sec:con}
In this paper we addressed the solution of a two-stage stochastic dispatch for system operators. We showed that a simple control exists under mild to moderate uncertainty about future realizations of net demand. The control is composed of two parts, one which is the certainty equivalent control rule, and another that is a deviation that hedges against the uncertainty by appropriately taking into account costs and recourse opportunities. Moreover, by incorporating the fact that only a small number of transmission lines that congest at any given hour, the optimal dispatch can be calculated analytically.   The price of uncertainty is a tool to measure the performance of distinct dispatch procedures. We show that under mild assumptions on forecast errors, the proposed dispatch achieves the cost bound given by the price of uncertainty. The proposed procedure also performs rather well in a full AC network.
\bibliographystyle{IEEETran}
\bibliography{mybib}
\vspace{-0.5cm}
\appendix
\subsection{Proof of Theorem \ref{thm:price_single}} \label{app:price_single}
Mathematically, the \emph{small-$\sigma$} assumption means that we are operating in the scaling regime where $\frac{1}{\sigma_e} \hat{d} \rightarrow \infty$.
Under this assumption,
\begin{subequations}
\begin{align*}
C(\hat{d})&= \lim_{\frac{1}{\sigma_e} \hat{d} \rightarrow \infty} \{\min_g \; \al g + \be \E[(d-g)^+|\hat{d}, d>0] \nn \\
 &- \al \E[d^+|\hat{d},d>0] \} \\
          &\stackrel{(a)}{=} \lim_{\frac{1}{\sigma_e} \hat{d} \rightarrow \infty} \{\min_g \; \al g + \be \E[(d-g)^+|\hat{d},d>0]  \nn \\
          &- \al\E[d|\hat{d},d>0]\} \\
          &= \lim_{\frac{1}{\sigma_e} \hat{d} \rightarrow \infty}\{\min_g\; \al g + \be\E[(\hat{d}+e-g)^+|\hat{d},d>0] \\
          &  - \al \E[d|\hat{d},d>0]\} \\
          &\stackrel{(b)}{=} \lim_{\frac{1}{\sigma_e} \hat{d} \rightarrow \infty}\{\min_\Del \; \al(\hat{d}+\Del) + \be\E[(e-\Del)^+|\hat{d},d>0] \\
          & -\al \E[\hat{d}+e|\hat{d},d>0]\} \\
          & =\lim_{\frac{1}{\sigma_e} \hat{d} \rightarrow \infty}\{ \min_\Del \; \al \Del + \be\E[(e-\Del)^+|\hat{d},d>0] \\
          & -\alpha \E[e|\hat{d},d>0]\}\\
          &\stackrel{(c)}{=} \sigma_e \lim_{\frac{1}{\sigma_e} \hat{d} \rightarrow \infty}\{ \min_{\Del'} \; \al \Del' + \be  \E[(z-\Del')^+|\hat{d},d>0]\} \label{eqn:Del'} \\
          &\stackrel{(d)}{=} \sigma_e p,
\end{align*}
\end{subequations}
where $(a)$ follows from the assumption $d>0$, $(b)$ follows from setting $g=\hat{d}+\Del$, $(c)$ follows from changes from variables where $\Del'=\Del/\sigma_e$ and $z=e/\sigma_e$ and the mean of $e$ remains $0$ in the limit and $(d)$ is follows the calculation below. 

From first order conditions, the optimal solution $\Delta'^*$ solves
\begin{subequations}
\begin{align*}
\alpha & =\beta \lim_{\frac{1}{\sigma_e} \hat{d} \rightarrow \infty} \Pr(z>\Delta'^*|\hat{d},d>0)\\
&= \beta \lim_{\frac{1}{\sigma_e} \hat{d} \rightarrow \infty} \int_{\min(\Delta'^*,-\frac{1}{\sigma_e} \hat{d})}^{\infty} \phi(x) dx=\beta Q(\Delta'^*). 
\end{align*}
\end{subequations}
Therefore,$\Delta'^*=Q^{-1}(\frac{\alpha}{\beta})$. The price of uncertainty $p$ can be calculated as
\begin{subequations}
\begin{align*}
p &= \lim_{\frac{1}{\sigma_e} \hat{d} \rightarrow \infty}\{\al Q^{-1}(\frac{\al}{\be})+ \be \E[(z-Q^{-1}(\frac{\al}{\be}))^+|\hat{d},d>0] \} \\
  &= \al Q^{-1}(\frac{\al}{\be}) \\
  &+ \be \lim_{\frac{1}{\sigma_e} \hat{d} \rightarrow \infty}\{ \int_{\min(Q^{-1}(\frac{\alpha}{\beta},-\frac{1}{\sigma_e} \hat{d})}^{\infty} (z-Q^{-1}(\frac{\al}{\be}))\phi(z) dz\} \\
  &= \al Q^{-1}(\frac{\al}{\be})+\be (-\frac{\al}{\be}Q^{-1}(\frac{\al}{\be})+ \phi(Q^{-1}(\frac{\al}{\be}))) \\
  &= \be \phi(Q^{-1}(\frac{\al}{\be})).
\end{align*}
\end{subequations} 

\subsection{Proof of Theorem \ref{thm:general}}
By assumption only the flow from 1 to 2 is congested, \eqref{eqn:J_full_A} can be replaced by an equivalent problem by choosing $f_{12}$ as a fundamental flow and including only the constraint on it. Without loss of generality, let $\tl{f}_1=f_{12}$.
\begin{subequations}\label{eqn:DCOPF12}
\begin{align}
J^*(\bm{\be},\bd{d}-\g) =\min & \bm{\be}^T (\g^R)^+ \\
 \mbox{subject to } & \g^R-(\bd{d}-\g) - \bd{A} \bd{\tl{f}} =0  \\
 &  \tl{f}_1 \leq C_{12}.
\end{align}
\end{subequations}
Writing \eqref{eqn:DCOPF12} as a linear program gives
\begin{subequations} \label{eqn:DCOPFlin}
\begin{align}
\mbox{minimize } & \bm{\al}^T \y \\
\mbox{subject to } & \y - \bd{A} \bd{\tl{f}} -\hd \geq 0 \\
& \tl{f}_1 \leq C_{12}\\
& \y \geq 0.
\end{align}
\end{subequations}
The Lagrangian is
\begin{equation*}
\mc{L}= \bm{\al}^T \y - \bm{\la}^T(\y -\bd{A} \bd{\tl{f}}-\hd) + \mu (\tl{f}_1-C_{12})-\bd{\nu}^T \y,
\end{equation*}
where $\bm{\la}$, $\mu$, and $\bd{\nu}$ are the Lagrangian multipliers. Differentiating with respect to $\y$ gives $\bm{\al} - \bm{\la} -\bm{\nu} =0.$

Since $\bm{\nu}$ are the Lagrangian multipliers associated with the constraint $\y \geq 0$, but complementary slackness $y_i >0$ only if $\nu_i=0$ or $\la_i=\al_i$. Equivalently, $\ov{g}_i >0$ only if $\nu_i=0$ or $\la_i=\al_i$. Differentiating with respect to $\bd{\tl{f}}$ gives
\begin{equation} \label{eqn:la_mu}
\bd{A}^T \bm{\la} + \mu \bd{h}_1 =0,
\end{equation}
where $\bd{h}_1=[1 \; 0 \; \cdots \; 0]^T$ is the first standard basis.
The dual is
\begin{subequations} \label{eqn:DCOPFdual}
\begin{align}
\mbox{maximize } & \bm{\la}^T \hd - \mu C_{12} \\
\mbox{subject to } & 0 \leq \bm{\la} \leq \bm{\al} \\
& \bd{A}^T \bm{\la} + \mu \bd{h}_1 =0\\
& \mu \geq 0.
\end{align}
\end{subequations}
At first glance it seems that the dimension of \eqref{eqn:DCOPFdual} is $n+1$. However since \eqref{eqn:la_mu} is $n-1$ equations involving $n+1$ unknowns, there are only $2$ independent variables. The next claim gives a precise characterization of the solution of \eqref{eqn:DCOPFdual}.
\begin{claim} \label{clm:1}
The solutions to \eqref{eqn:DCOPFdual} are in the forms of
\begin{equation*}
\bm{\la} = \bma 1 \\0 \\ \gamma_3 \\ \vdots \\ \gamma_n \ebma \la_1 + \bma 0 \\ 1 \\ 1-\gamma_3 \\ \vdots \\ 1-\gamma_n \ebma \la_2,
\end{equation*}
where $\gamma_i \in [0,1]$ for $i=3,\dots,n$ and $\mu= k (\la_2 -\la_1)$,
where $k$ is a positive constant depending on the graph structure. 
\end{claim}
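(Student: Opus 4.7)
The plan rests on recognizing $\bd{A}$ as a generalized incidence matrix for fundamental flows: since any flow pattern yields zero total bus injection, one has $\bd{1}^T \bd{A} = 0$, and since the underlying graph is connected $\bd{A}$ has full column rank $n-1$. Hence the system $\bd{A}^T \bm{\la} + \mu \bd{h}_1 = 0$ of $n-1$ equations in the $n+1$ unknowns $(\bm{\la},\mu)$ has a null space of dimension exactly two, so it suffices to exhibit two linearly independent solutions and reparameterize them.

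One obvious solution is $(\bm{\la},\mu) = (\bd{1},0)$, by $\bd{A}^T \bd{1} = 0$. For a second, I would use the flows $\bd{\tl{f}}^{(i)}$ defined in \eqref{eqn:g_flow}: by the defining equations and flow conservation, $\bd{A}\bd{\tl{f}}^{(i)}$ equals $\gamma_i$ at bus 1, $1-\gamma_i$ at bus 2, $-1$ at bus $i$, and $0$ elsewhere. Setting $\bd{v}_1 = [1,0,\gamma_3,\ldots,\gamma_n]^T$, a direct computation gives $\bd{v}_1^T \bd{A}\bd{\tl{f}}^{(i)} = \gamma_i\cdot 1 + (1-\gamma_i)\cdot 0 + (-1)\cdot\gamma_i = 0$ for every $i \geq 3$. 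Since $\{\bd{\tl{f}}^{(i)}\}_{i=3}^n$ span the $(n-2)$-dimensional subspace $\{\bd{\tl{f}}:\tl{f}_1=0\}$ of fundamental flows, $\bd{A}^T \bd{v}_1$ is orthogonal to that subspace and must therefore lie in $\cspan\{\bd{h}_1\}$. Writing $\bd{A}^T \bd{v}_1 = k\,\bd{h}_1$, the pair $(\bd{v}_1,-k)$ is a solution, linearly independent from $(\bd{1},0)$ because they disagree in position $2$.

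Reparameterizing $(\bm{\la},\mu) = \la_2(\bd{1},0) + (\la_1-\la_2)(\bd{v}_1,-k)$ then yields the form asserted in the claim, with $\bd{v}_2 := \bd{1}-\bd{v}_1 = [0,1,1-\gamma_3,\ldots,1-\gamma_n]^T$ and $\mu = k(\la_2-\la_1)$. The one piece that goes beyond pure linear algebra is the two-sided bound $\gamma_i \in [0,1]$ together with the positivity $k>0$. Both express physical properties of the DC flow model: $\gamma_i$ is the shift factor (power-transfer distribution factor) giving the fraction of a unit withdrawal at bus $i$ supplied from bus 1 when the congested line $f_{12}$ carries no flow, and $k$ is the induced injection on that congested edge when one sets $\la_1=1,\la_2=0$. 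I expect establishing $\gamma_i \in [0,1]$ and $k>0$ to be the main obstacle; both follow from standard DC-flow arguments (monotonicity of the Laplacian pseudoinverse along paths in a connected network with positive line susceptances, together with the orientation convention on the congested edge), but they require invoking the underlying network structure rather than the algebraic null-space count that yields the parametric form.
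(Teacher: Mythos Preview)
Your proposal is correct and follows essentially the same strategy as the paper: both identify the constraint $\bd{A}^T\bm{\la}+\mu\bd{h}_1=0$ as carving out a two-dimensional solution space, exhibit $(\bd{1},0)$ and $(\bd{v}_1,-k)$ as a basis, reparameterize, and then invoke the DC-flow interpretation of $\gamma_i$ (as the fraction of a unit withdrawal at bus $i$ supplied from bus $1$ when $f_{12}=0$) to obtain $\gamma_i\in[0,1]$. The paper, like you, does not prove $k>0$ beyond asserting it.

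The one noteworthy difference is in how you verify $\bd{A}^T\bd{v}_1\in\cspan\{\bd{h}_1\}$. You compute $\bd{v}_1^T\bd{A}\bd{\tl{f}}^{(i)}=\gamma_i-\gamma_i=0$ for each $i\ge3$ and conclude, since the $\bd{\tl{f}}^{(i)}$ span the hyperplane $\{\tl f_1=0\}$, that $\bd{A}^T\bd{v}_1$ must be a multiple of $\bd{h}_1$. The paper instead expands $\bd{A}^T\bd{v}_1=\bd{a}_1+\sum_{i\ge3}\gamma_i\bd{a}_i$, substitutes $\gamma_i=\bd{a}_1^T\bd{\tl{f}}^{(i)}=-\bd{a}_1^T\tl{\bd{A}}^{-1}\bd{h}_{i-1}$, and collapses the sum to $[*,0,\ldots,0]^T$ via a separate block-matrix lemma about $\bd{X}\bd{Y}_1$ when $[\bd{h}_1\;\bd{X}]$ is invertible with inverse $\bd{Y}$. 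Your orthogonality argument is more transparent and avoids the auxiliary lemma, but both routes ultimately rest on the same defining equations \eqref{eqn:g_flow} for the flows $\bd{\tl{f}}^{(i)}$, so the difference is one of presentation rather than substance.
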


Suppose the claim is true.
The first statement of Theorem \ref{thm:general} is that only two nodes are generating energy. From complementary slackness, $\ov{g}_i >0$ only if $\la_i = \al_i$. Since $\bm{\al}$ has only two degrees of freedom, for generic $\bm{\al}$, $\la_i=\al_i$ for at most two components. Therefore in general only two nodes would be generating energy.

The second statement is that only the nodes that generates power would be used to do the perturbation control. That is, $\Del_i \neq 0$ only if $\ov{g}_i >0$. The intuition is as follows: suppose $\ov{g}_i <0$, then under the small sigma assumption, $\ov{g}_i$ can be viewed as an infinite source of free energy, so no perturbation is needed; suppose $\ov{g}_i =0$, if a small unit of energy is purchased at node $i$, there is a cheaper option to purchase the unit of energy somewhere else (or $\ov{g}_i$ would have been positive), therefore $\Del_i=0$.

To show that the problem reduces to a two bus network if  all $\be$ are equal, we need to consider the second stage optimization problem. Now let $\bd{f}$ denote the set of perturbed flows. Since the line from 1 to 2 is congested in the nominal problem, $\tl{f}_1=f_{12}<0$. Let $x_i= \Del_i + (-\ov{g}_i)^+/\sigma-z_i$, where $\Del_i$ is the first stage control, $(-\ov{g}_i)$ is the left over energy, and $z_i$ is the normalized estimation error. The second stage optimization problem becomes
\begin{subequations} \label{eqn:per}
\begin{align}
J(\be,\ov{g})=\mbox{minimize } & \bm{\be}^T \y \\
\mbox{subject to } & \y - \bd{A} \bd{\tl{f}} +\x \geq 0 \\
& \tl{f}_1 \leq 0\\
& \y \geq 0.
\end{align}
\end{subequations}
This optimization problem has precisely the same form as \eqref{eqn:DCOPFlin}, with $C_{12}=0$. By Claim \ref{clm:1}, the dual of \eqref{eqn:per} is
\begin{subequations} \label{eqn:per_dual}
\begin{align}
\mbox{maximize } & - \la_1 \bm{\gamma}^T \x-\la_2 (\bd{1}-\bm{\gamma})^T \x   \\
\mbox{subject to } & 0 \leq \bm{\gamma} \la_1 + (\bd{1}-\bm{\gamma}) \la_2 \leq \bm{\be} \\
& \la_2-\la_1 \geq 0,
\end{align}
\end{subequations}
where $\bm{\gamma}=[ 1 \; 0 \; \gamma_3 \; \cdots \; \gamma_n]^T$, $a_i \in [0,1]$ and depends on the network topology for $i=3,\dots,n$. If $\be$'s are all the same (or $\be_1$, $\be_2$ are smaller than all other $\be$'s), the dual reduces to
\begin{subequations} \label{eqn:per_dual2}
\begin{align}
\mbox{maximize } & - \la_1 x_1'-\la_2 x_2'   \\
\mbox{subject to } & 0 \leq \la_1 \leq \be_1  \\
& 0 \leq \la_2 \leq \be_2 \\
& \la_2-\la_1 \geq 0,
\end{align}
\end{subequations}
where $x_1'= \bm{\gamma}^T \x$ and $x_2'=(\bd{1}-\bm{\gamma})^T \x$. This is exactly the dual of a two bus network with prediction errors $\bm{\gamma}^T \z$ and $(1-\bm{\gamma})^T \bd{z}$, leftover energy $\bm{\gamma}^T (-\ov{\g})^+$ and $(1-\bm{\gamma})^T (-\ov{\g})^+$,  and controls $\bm{\gamma}^T \bm{\Del}$ and $(1-\bm{\gamma}^T) \bm{\Del}$.

Let $\Del_1'=\bm{\gamma}^T \bm{\Del}$  and $\Del_2'=(1-\bm{\gamma})^T \bm{\Del}$. It can be shown that if there are two generating nodes then one of them can be taken to be node $1$. Suppose the other generating node is node $k$. To solve equilibrium equation \eqref{eqn:equi_2bus} for $\Del_1'$ and $\Del_2'$, the associated first stage costs are $ \al_1$ and $(\frac{\al_k}{\gamma_k}-\gamma_k \al_1)$ respectively.
\subsection{Proof of Claim \ref{clm:1}}
We prove Claim \ref{clm:1} be guessing the solution and verifying it is correct.  We show $\bm{\la}=[1 \; 0 \; \gamma_3 \; \cdots \; \gamma_n]^T$ where $\gamma_i$ is given by \eqref{eqn:gamma} solves \eqref{eqn:la_mu}. Expanding $\bd{A}^T \bm{\la}$ gives
\begin{align*}
\bd{A}^T \bm{\la} &= \sum_{i=1}^n \bd{a}_i \gamma_i \\
&= \bd{a}_1 + \sum_{i=3}^n  \bd{a}_i \gamma_i  \\
& = \bd{a}_1 + \sum_{i=3}^n \bd{a}_i (\bd{a}_1^T \bd{\tl{f}}^{(i)}) \\
& = \bd{a}_1 + \sum_{i=3}^n \bd{a}_i ( (\bd{\tl{f}}^{(i)})^T \bd{a}_1)  \\
& = \bd{a}_1 + \sum_{i=3}^n \bd{a}_i ( (-\tl{\bd{A}}^{-1} \bd{h}_{i-1})^T \bd{a}_1) \\
&= \bd{a}_1 - (\bd{A}_2^T (-\tl{\bd{A}}^{-1} \bd{h}_{i-1})^T)\bd{a}_1 \\
&= (\bd{I}-(\bd{A}_2^T (-\tl{\bd{A}}^{-1} \bd{h}_{i-1})^T)) \bd{a}_1 \\
& \stackrel{(a)}{=} (\bd{I} - \bma * & * & * & \cdots & * \\ 0 & 1 & 0 & \cdots & 0 \\
0 & 0 & 1 & \cdots & 0 \\
& & & \vdots & 0 \\
0 & 0 & 0 & \cdots & 1 \ebma) \bd{a}_1 \\
& = \bma * \\ 0 \\ \vdots \\ 0 \ebma,
\end{align*}
where $*$ denote a generic number and $(a)$ follows from observing that $(\tl{\bd{A}}^{-1} \bd{h}_{i-1})^T$ is the transpose of $(\tl{\bd{A}}^T)^{-1}$ without the first row and the following simple lemma
\begin{lem}
Let $\bd{X}$ be a $r \times r-1$ matrix and suppose the matrix $\bma \bd{h}_1 & \bd{X} \ebma$ is invertible with inverse $\bd{Y}$. Let $\bd{Y}_1$ be the matrix obtained from $\bd{Y}$ by removing the first row. Then
\begin{equation*}
\bd{X} \bd{Y}_1 = \bma * & * & * & \cdots & * \\ 0 & 1 & 0 & \cdots & 0 \\
0 & 0 & 1 & \cdots & 0 \\
& & & \vdots & 0 \\
0 & 0 & 0 & \cdots & 1 \ebma.
\end{equation*}
\end{lem}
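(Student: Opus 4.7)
The plan is to reduce the statement to a single block-matrix computation based on the defining identity for the inverse. Writing $\bd{M} = \bma \bd{h}_1 & \bd{X} \ebma$, the hypothesis is that $\bd{M}$ is invertible with $\bd{M}\bd{Y} = \bd{I}_r$. I would partition $\bd{Y}$ by its first row: set $\bd{Y} = \bma \bd{y}_1^T \\ \bd{Y}_1 \ebma$, where $\bd{y}_1^T \in \R^{1 \times r}$ and $\bd{Y}_1 \in \R^{(r-1) \times r}$ is exactly the matrix in the statement. By block matrix multiplication, $\bd{M}\bd{Y} = \bd{h}_1 \bd{y}_1^T + \bd{X}\bd{Y}_1$, so the inverse relation rearranges to
\begin{equation*}
\bd{X}\bd{Y}_1 \;=\; \bd{I}_r - \bd{h}_1 \bd{y}_1^T .
\end{equation*}

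Next I would observe that $\bd{h}_1 \bd{y}_1^T$ is a rank-one matrix whose only nonzero row is the first one (equal to $\bd{y}_1^T$), because $\bd{h}_1 = [1\;0\;\cdots\;0]^T$. Consequently, $\bd{I}_r - \bd{h}_1 \bd{y}_1^T$ coincides with $\bd{I}_r$ in rows $2$ through $r$ and differs from $\bd{I}_r$ only in its first row, where the entries $(1 - y_{1,1}, -y_{1,2}, \ldots, -y_{1,r})$ appear. Denoting these unconstrained entries by $*$ produces exactly the target block form
\begin{equation*}
\bma * & * & * & \cdots & * \\ 0 & 1 & 0 & \cdots & 0 \\ 0 & 0 & 1 & \cdots & 0 \\ & & & \vdots & 0 \\ 0 & 0 & 0 & \cdots & 1 \ebma.
\end{equation*}

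The only thing to verify besides this direct calculation is the bookkeeping of dimensions: since $\bd{M}$ is $r \times r$, so is $\bd{Y}$, whence $\bd{Y}_1$ is $(r-1) \times r$ and the product $\bd{X}\bd{Y}_1$ is $r \times r$, matching the displayed matrix. There is essentially no technical obstacle here; the statement is a one-line consequence of the identity $\bd{M}\bd{M}^{-1} = \bd{I}$ combined with the rank-one structure of $\bd{h}_1 \bd{y}_1^T$. Its role in the broader proof of Claim \ref{clm:1} is to let me cleanly extract the last $r-1$ rows of $\bd{X}\bd{Y}_1$ as identity rows, so that when it is substituted into the computation of $\bd{A}^T\bm{\la}$ the nonzero contributions collapse onto the first coordinate and produce the claimed proportionality $\mu = k(\la_2 - \la_1)$.
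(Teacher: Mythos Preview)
Your proof is correct; the paper itself states this lemma without proof, calling it a ``simple lemma,'' so there is no paper argument to compare against. Your block-partition computation $\bd{X}\bd{Y}_1 = \bd{I}_r - \bd{h}_1\bd{y}_1^T$ is the natural one-line derivation and matches the spirit in which the paper invokes the result.
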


We still need to show that $1 \geq 0\gamma_i \geq 0$. This can be done through graph theory, but it is simpler to recognize that the flows given by \eqref{eqn:g_flow} solves a DC OPF problem. For the $i$'th node, the optimization problem is to find the least generation need at nodes $1$ and $2$, satisfying a demand of $1$ unit at node $i$, $0$ demand at all other nodes, and no flow on the line between $1$ and $2$. Therefore $\gamma_i$ is the proportion of power that node $1$ produced, and $1-\gamma_i$ is the proportion of power that node $2$ produced.

The vector $[0 \; 1 \; 1-\gamma_3 \; \cdots \; 1-\gamma_n]$ is a solution to \eqref{eqn:la_mu} since $\bd{1}$ is in the null space of $\bd{A}^T$.

\subsection{Nonlinear Prices}
First consider the following nominal OPF problem where $c_i$'s are differentiable 
\begin{subequations} \label{eqn:pos}
\begin{align}
\mbox{minimize } & \sum_{i=1}^n q_i(g_i^+) \\
\mbox{subject to }& \g-\bd{A} \bd{\tl{f}} -\bd{\hat{d}} =0,
\end{align}
\end{subequations}
where $q_i(\cdot)$ is increasing and convex, $\bd{\tl{f}}$ are the fundamental flows and the line capacities are infinite. Rewriting \eqref{eqn:pos} as
\begin{subequations} \label{eqn:y}
\begin{align}
\mbox{minimize } & \sum_{i=1}^n q_i(y) \\
\mbox{subject to }& \y-\bd{A} \bd{\tl{f}} -\bd{\hat{d}} \geq 0 \\
& \y \geq 0.
\end{align}
\end{subequations}
The Lagrangian of \eqref{eqn:y} is
\begin{equation} \label{eqn:L1}
\mc{L}= \sum_{i=1}^n q_i(y) -\bm{\la}^T(\y-\bd{A} \bd{\tl{f}} -\bd{\hat{d}} \geq 0) - \bm{\nu}^T \y.
\end{equation}
At optimal, the primal and dual variables minimizes the Lagrangian, so $q_i'(y_i^*)- \la_i^*-\nu_i^*=0$
and
$\bd{A}^T \bm{\la}^* =0.$

The null space of $\bd{A}^T$ is spanned by the $\bd{1}$ vector, therefore $\bm{\la}^*= \la^* \bd{1}$ for some $\la^*$. By complementary slackness, node $i$ is generating if $\nu_i=0$. The nodes in the network can be divided into two groups, one group is all the generating node and the other is the non-generating nodes. For all the generating nodes, the marginal prices are all the same, that is, $q_i' (y_i^*)=q_k'(y_k^*)=\la^*$ if $y_i >0$ and $y_k >0$. From the perturbation control of view, the network consists of many nodes with the same first stage cost $\la^*$, therefore the perturbation control $\Del$ can be divided up in arbitrary fashion between those nodes.

Now suppose that the network has one congested link. The nominal problem becomes
\begin{subequations} \label{eqn:y2}
\begin{align}
\mbox{minimize } & \sum_{i=1}^n q_i(y) \\
\mbox{subject to }& \y-\bd{A} \bd{\tl{f}} -\bd{\hat{d}} \geq 0 \\
& \tl{f}_1 \leq C_{12} \\
& \y \geq 0.
\end{align}
\end{subequations}
The Lagrangian is
\begin{equation*}
\mc{L}= \sum_{i=1}^n q_i(y) -\bm{\la}^T(\y-\bd{A} \bd{\tl{f}} -\bd{\hat{d}} \geq 0) + \mu(\tl{f}_1-C_{12})- \bm{\nu}^T \y.
\end{equation*}
At optimal, $q_i'(y_i^*)- \la_i^*-\nu_i^*=0$
and 
$\bd{A}^T \bm{\la}^* + \mu^* \bd{h}_1 =0.$

From previous results, $\bm{\la}$ can be decomposed as $\bm{\la}= \bd{a} \la_1 + (\bd{1}-\bd{a}) \la_2$, where $a_1=1, a_2=0$ and $a_i\in [0,1]$ for $i=3,\dots,n$. The decision variable in the two stage optimization problem becomes $\Del_1'= \bd{a}^T \bm{\Del}$ and $\Del_2'=(\bd{1} -\bd{a})^T \bm{\Del}$, and the optimization problem is
\begin{equation*}
\min_{\Del_1',\Del_2'} \la_1^* \Del_1'+ \la_2^* \Del_2' + \E[J(\bm{\be},\Del_1',\Del_2')].
\end{equation*}
After solving for the optimal $\Del_1'$ and $\Del_2'$, we need to map it back to the original $\Del_i$'s. Again, if $y_i=0$, then we set $\Del_i=0$. For the rest of the nodes, $\Del_1'$ and $\Del_2'$ can be split up in any fashion as long as it is consistent.
\subsection{Piecewise Linear Cost Functions}
In practice piecewise linear cost functions (convex, increasing) are often used. We start again with the uncongested network. Since the cost functions are not differentiable, the Lagrangian in \eqref{eqn:L1} is not differentiable. We use subgradients instead. Given a function $f(\x): \R^n \rightarrow \R$, a vector $ \bd{s} \in \R^n$ is a subgradient of $f$ at $\x$ if $ f(\y)-f(\x) \geq \bd{s}^T (\y-\x)$ for all $\y$. The set of all subgradients at $\x$ is called the subdifferential and denoted $\partial f(\x)$. By the convexity, at optimality,
\begin{equation*}
s_i^*- \la_i^*-\nu_i^*=0
\end{equation*}
for some $s_i^* \in \partial q_i (y_i^*)$ and $\bd{A}^T \bm{\la}^* =0.$
We still have all $\la$'s are equal and the two stage problem only depends $\Del=\Del_1+\Del_2+\dots+\Del_n$. In contrast to the differentiable cost case, here the control is only done at one node, that is, $\Del=\Del_i$ for some node $i$ and $\Del_k=0$ if $k \neq i$. The reason is that all generating node except one would be operating at a corner point on their respective cost curves and it is optimal not to change those corner points. Only one generator would be operating at a point not at the corner, and all the control should be done at that point.
\begin{figure}[ht]
\centering
\psfrag{g}{$g$}
\psfrag{q}{$q$}
\subfigure[Corner Operating Point]{
\includegraphics[scale=0.3]{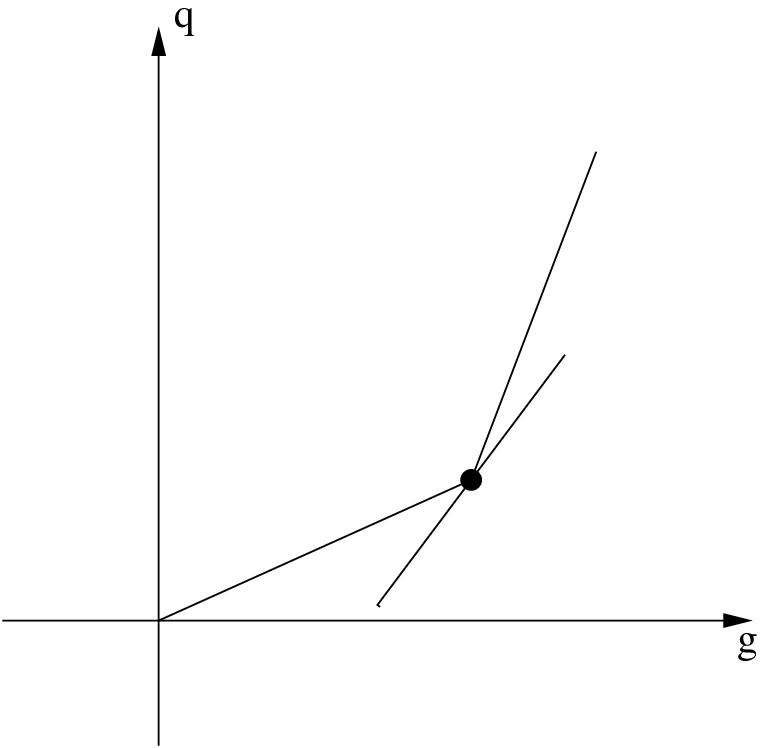}}
\subfigure[Non-corner Operating Point]{
\includegraphics[scale=0.3]{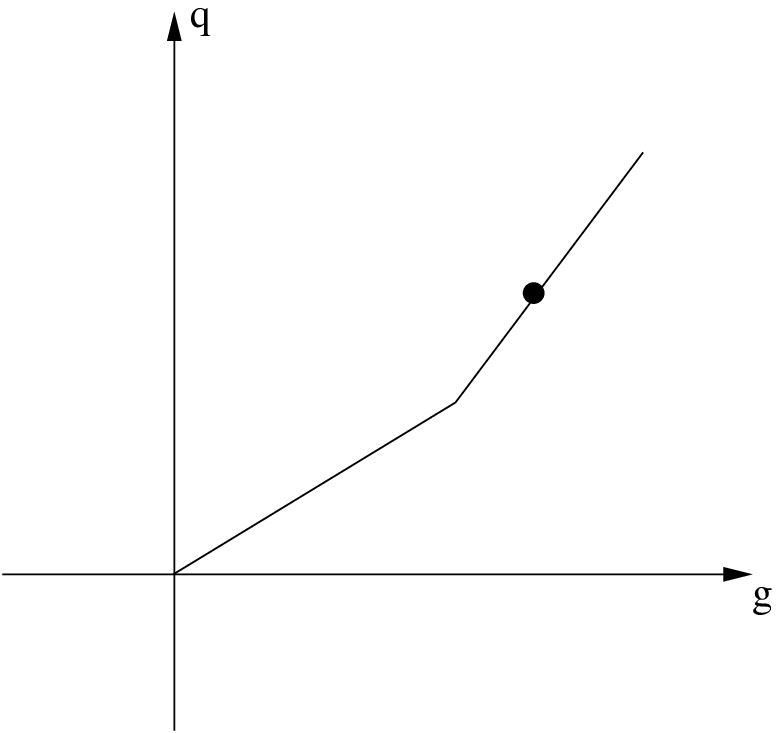}}%
\caption{All generating node except one will be operating at a corner point.}%
\label{fig:piece}%
\end{figure}
Similarly, for a network with one congested link, only two generators will be operating at a non-corner point, and the perturbation control is done at those two points.
\end{document}